\newcolumntype{L}{>{\centering\arraybackslash}m{2.5cm}}
\newtheorem{thm}{Theorem}[section]
\newtheorem{lem}[thm]{Lemma}
\newtheorem{prop}[thm]{Proposition}
\theoremstyle{definition}
\newtheorem{conj}[thm]{Conjecture}
\theoremstyle{remark}
\newtheorem{rmk}[thm]{Remark}
\numberwithin{equation}{section}
\newcommand{\supp}{\mathrm{supp}}
\newcommand{\R}{\mathbb{R}}
\newcommand{\N}{\mathbb{N}}
\newcommand{\Z}{\mathbb{Z}}
\def\P{{\mathbb P}}
\begin{document}
\setcounter{page}{1}

\color{black}{
%\noindent 
%{\small Annals of Mathematics and Computer Science}\hfill     {\small ISSN: 2789-7206}\\
%{\small Vol 20 (2024) 1-3}\hfill  {\small https://doi.org/10.56947/amcs.v20.223}}

\centerline{}
\centerline{}
\title[A reverse EPI for i.i.d. log-concave random variables]{A reverse entropy power inequality for i.i.d. log-concave random variables}
\author[Zhen Fu, and Jiange Li]{Zhen Fu, and Jiange Li} %$^2$$^{*}$
\address{(Z.F, J.L) Institute for Advanced Study in Mathematics, Harbin Institute of Technology, China}
\email{zhenfu@stu.hit.edu.cn, jiange.li@hit.edu.cn}

%\dedicatory{This paper is dedicated to Professor ABCD}
% \subjclass[2020]{Primary 94A17; Secondary 52A40, 60E15, 42A85.}
%\keywords{Analysis, PDEs, algebra, number theory, applied mathematics}
%\date{\today}
% \newline \indent $^{*}$ Corresponding author}

\begin{abstract}
We show that $h_\infty(X+Y)\leq h_\infty(Z+W)$, where $X, Y$ are independent log-concave random variables, and $Z, W$ are exponential random variables having the same respective $\infty$-R\'enyi entropies. Analogs for integer-valued monotone log-concave random variables are also obtained. Our main tools are decreasing rearrangement, majorization, and the change of measure.
\end{abstract} 

\maketitle

%---------------------------------------------------------------------Introduction---------------------------------------------------------------------

\section{Introduction}
Let $X$ be a random vector taking values in $\R^d$. Suppose that it has density $f$ with respect to the Lebesgue measure. The classical \textit{differential entropy} (also called \textit{Boltzmann-Shannon entropy}) of $X$ is defined as
$$
h(X) =-\int_{\R^d} f(x)\log f(x)dx.
$$
The celebrated \textit{entropy power inequality} (or EPI for short) of Shannon \cite{Sha48} (and Stam \cite{Sta59}) states that for independent random vectors $X$ and $Y$ in $\R^d$ such that the entropies of $X, Y$, and $X+Y$ exist, it holds that
\begin{equation}\label{eq:epi}
e^{\frac{2}{d}h(X+Y)}\geq e^{\frac{2}{d}h(X)}+e^{\frac{2}{d}h(Y)},
\end{equation}
where equality holds if and only if $X$ and $Y$ are Gaussian random vectors with proportional covariance matrices. The EPI found its original applications in the study of channel capacity in information theory. It has been known very well that EPI is closely related to geometric and functional inequalities in a variety of mathematical fields, maybe most notably the Brunn-Minkowski inequality (or BMI for short) in convex geometry. We refer the interested reader to \cite{Gar02, MMX17} for profound connections among EPI, BMI, and many other geometric and functional inequalities.

Convexity (or concavity) underpins the validity of the reversal of many geometric and functional inequalities. A function $f:\R^d\to \R_+$ is called \textit{log-concave} if it can be written as $f(x)=e^{-V(x)}$, where $V: \R^d\to \R\cup\{\infty\}$ is a convex function. We say that a random vector $X$ taking values in $\R^d$ is log-concave provided that its density is a log-concave function. (We also say that $X$ has log-concave distribution). The class of log-concave functions can be thought of as functional analogs of convex bodies. They are prominent objects studied in convex geometry, high dimensional probability and statistics, as well as theoretical computer science. The class of log-concave distributions include a wide range of important probability distributions, such as the Gaussian distributions, uniform distributions on convex bodies, and exponential distributions. %(with densities $\prod_{i=1}^d\lambda_ie^{-\lambda_i x_i}\mathbbm{1}_{\R_+}(x_i)$).

The reversal of EPI was first addressed by Bobkov and Madiman \cite{BM12}. They showed that for independent log-concave random vectors, under linear volume preserving maps, inequality \eqref{eq:epi} can be reversed at the cost of a constant factor on the r.h.s of the inequality. This can be thought of as a functional analog of Milman's reverse BMI for convex bodies \cite{Mil86}. It is necessary to place log-concave random vectors ``at right positions" via linear volume preserving maps for the reversed EPI to hold. We are interested in the sharp reverse EPI for \textit{independent and identically distributed} (or i.i.d. for short) log-concave random vectors. In this case, linear volume preserving maps are not needed and the question can be phrased as follows.

\begin{conj} [Folklore] \label{conj:folklore}
Let $X$ and $Y$ be i.i.d. log-concave random vectors taking values in $\R^d$. The entropy increment $h(X+Y)-h(X)$ is maximized when $X$ and $Y$ have exponential distributions. % (the densities are of the form $\prod_{i=1}^d \lambda_ie^{-\lambda_i x_i}\ind_{\R_+}(x_i)$, where $\lambda_i>0$ for $i=1, 2, \cdots, d$).
\end{conj}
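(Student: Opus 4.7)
The plan is a reduction cascade followed by an extremal analysis. First I would reduce to dimension one. If $X=(X_1,\dots,X_d)$ has a product log-concave density $f=f_1\otimes\cdots\otimes f_d$, then $h(X+Y)-h(X)=\sum_{i=1}^d[h(X_i+Y_i)-h(X_i)]$, so the one-dimensional bound applied coordinatewise yields the $d$-dimensional bound along products, and a product of $d$ exponentials is itself log-concave and saturates the target inequality. The harder side of this reduction is showing that the supremum of $h(X+Y)-h(X)$ over all $d$-dimensional i.i.d.\ log-concave laws is attained on a product distribution. A natural route is iterative Steiner-type symmetrization in one coordinate at a time, combined with a rearrangement-monotonicity statement for the Shannon entropy of convolutions derived from the Brascamp--Lieb--Luttinger inequality.

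For $d=1$, my next step would be to reduce to monotone densities on the half-line. Given a log-concave density $f$ on $\R$, let $f^\sharp$ denote its one-sided decreasing rearrangement on $[0,\infty)$; this is log-concave and $h(X^\sharp)=h(X)$. I would aim to show $h(X+Y)\leq h(X^\sharp+Y^\sharp)$. Classical Riesz--Sobolev plus Hardy--Littlewood compares $(f\ast f)^\ast$ to $f^\ast\ast f^\ast$ in the \emph{wrong} direction for Shannon entropy, so a refinement exploiting log-concavity (for example, a two-sided majorization valid for log-concave densities rather than arbitrary ones) would be required here.

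Within the class of monotone log-concave densities $f(x)=e^{-V(x)}$ on $[0,\infty)$ with $V$ convex and nondecreasing, I would set up a variational problem. Normalizing $f(0)=1$ by scale invariance, consider
\[
\Phi(V):=h(X+Y)-h(X)=-\int_0^\infty (f\ast f)(x)\log(f\ast f)(x)\,dx+\int_0^\infty f(x)V(x)\,dx.
\]
The candidate maximizer is the linear potential $V(x)=\lambda x$. I would try to establish a Schur-concavity-type statement for $\Phi$ in a majorization order on convex potentials $V$ in which linear potentials are the maximal elements, using the change of measure $f=f_0\,e^{-u}$ against an exponential baseline $f_0=\lambda e^{-\lambda x}$ and a convex perturbation $u$; the first variation should vanish along admissible perturbations preserving $\int f=1$ and $f(0)=1$, confirming that the exponential is a critical point.

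The main obstacle will be passing from local to global extremality, since $f\ast f$ depends nonlocally on $f$ and $\Phi$ is nonlinear in $V$. This is precisely why the Shannon case has remained open while the $\infty$-R\'enyi analog proved in the present paper is more tractable: $\|f\ast g\|_\infty$ depends on $f,g$ in a way compatible with pointwise majorization, whereas $h(X+Y)$ is an integral quantity that entangles the tails and the mode of $f\ast f$. A plausible way to circumvent the obstacle is to first establish the conjecture for the full scale of R\'enyi entropies $h_\alpha$ with $\alpha\geq 1$, where the functionals $\|f\ast f\|_\alpha$ admit sharper rearrangement arguments, and then let $\alpha\to 1$ using continuity of $h_\alpha$ in $\alpha$ on the log-concave class.
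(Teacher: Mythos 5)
The statement you are asked to prove is labelled a \emph{conjecture} in the paper, and the paper does not prove it: the authors only establish the $\infty$-R\'enyi analog in one dimension (Theorem \ref{thm:infty-ent-jump}), via decreasing rearrangement and a majorization/change-of-measure argument. Your proposal is a research outline rather than a proof, and each stage of the reduction cascade rests on an unproved claim. Most concretely: (i) the reduction of the $d$-dimensional problem to products has no justification --- there is no known Steiner-type symmetrization that turns a general i.i.d.\ log-concave pair into a product pair while \emph{increasing} $h(X+Y)-h(X)$; (ii) the reduction to monotone densities on the half-line fails through the route you name, and you partly concede this. The Riesz--Sobolev/Brascamp--Lieb--Luttinger inequality shows that $f^\downarrow\ast g^\downarrow$ majorizes $f\ast g$, and since $-t\log t$ is concave this yields $h(X^\downarrow+Y^\downarrow)\leq h(X+Y)$ --- the opposite of the inequality you need. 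This is precisely why the paper works with $h_\infty$: Lemma \ref{lem:unimodal} gives $\|f\ast g\|_\infty\geq\|f^\downarrow\ast g^\downarrow\|_\infty$, which for the sup-norm functional is the \emph{right} direction, $h_\infty(X+Y)\leq h_\infty(X^\downarrow+Y^\downarrow)$. No ``refinement exploiting log-concavity'' that reverses the Shannon-entropy comparison is exhibited, and none is known; (iii) the variational step only identifies the exponential as a critical point of $\Phi$, which says nothing about global maximality for a nonconvex, nonlocal functional; (iv) the proposed limit $\alpha\to 1$ presupposes the conjecture for all R\'enyi orders $\alpha>1$, which is itself open (the only known positive results of this type concern $h_p(X-Y)-h_p(X)$ for $p\geq 2$, due to Melbourne and Tkocz).

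In short, there is no proof here to compare with the paper's, because the paper offers none either; but your outline should not be mistaken for a proof strategy that merely needs details filled in. Steps (i)--(iv) are not technical gaps: step (ii) as stated runs into a rearrangement inequality pointing the wrong way, and steps (i), (iii), (iv) each require a new idea. What your discussion does capture correctly is the structural reason the $p=\infty$ case is tractable --- $\|f\ast g\|_\infty$ is compatible with pointwise/majorization comparisons after rearrangement, while the Shannon entropy of $f\ast f$ entangles the whole profile of the convolution --- and that observation is consistent with the paper's choice to prove only the $h_\infty$ statement.
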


Clearly, the entropy increment is affine invariant. An earlier result of Cover and Zhang \cite{CZ94} shows that log-concavity and identical distribution (without independence between $X$ and $Y$) yield the upper bound $d\log 2$ of the entropy increment. Huang, Slomka, Tkocz, and Vritsiou (\cite{HSTV22}, Section 6) observed that the improvement of Cover and Zhang's bound for Conjecture \ref{conj:folklore} will lead to important progress on Hadwiger's covering problem. Ball, Nayar, and Tkocz \cite{BNT16} studied the entropy increment for a two dimensional log-concave random vector $(X, Y)$, whose marginals have identical differential entropy (but not necessarily independent). This variant of reverse EPI and its generalizations are entropic analogs of Busemann's convexity theorem of intersection bodies \cite{Bus49} and the convexity of $p$-cross-section bodies conjectured by Gardner and Giannopoulos \cite{GG99} (the end of Section 5, and the discussion in \cite{Li18}).

Aforementioned forward and reverse EPIs have been studied for general R\'enyi entropies. For $p\in (0, 1)\cup (1, \infty)$, the order $p$ R\'enyi entropy of $X$ is defined as 
\begin{equation}\label{eq:renyi}
h_p(X)=\frac{1}{1-p}\log \int_{\R^d}f(x)^pdx.
\end{equation}
For $p\in\{0, 1, \infty\}$, by taking limits, we have 
$h_0(X)=\log |\supp\{f\}|$, $h_\infty(X) =-\log \|f\|_\infty$, and $h_1(X) =h(X)=-\int_{\R^d} f(x)\log f(x)dx$. Here, we denote by $|\supp\{f\}|$ the Lebesgue measure of the support of $f$, and $\|f\|_\infty$ is the essential supremum of $f$, and $h_1(X)$ is the classical differential entropy $h(X)$ of $X$. 

It is suspected that a transition will occur in the increment of R\'enyi entropy. More precisely, \textit{there exists an absolute constant $p_0\in (0, 1)$ such that $h_p(X+Y)-h_p(X)$ is maximized by exponential random vectors for $p\geq p_0$, and is maximized by uniform random vectors taking values in the cube $[0, 1]^d$ for $0<p\leq p_0$}. It is perhaps worth to point out that the increment of R\'enyi entropy of order $p>1$ (respectively, $0<p<1$) boils down to the minimization (respectively, maximization) of $\|f\ast f\|_p/\|f\|_p$ over the class of integrable log-concave functions. This can be thought of as a special type of reverse Young's convolution inequality.  Our main result reads as follows.

\begin{thm}\label{thm:reverse-rogozin}
Let $X$ and $Y$ be independent real-valued log-concave random variables. Let $Z$ and $W$ be independent exponential random variables such that $h_\infty(Z)=h_\infty(X)$ and $h_\infty(W)=h_\infty(Y)$. We have
\begin{align*}
h_\infty(X+Y)\leq h_\infty(Z+W).
\end{align*}
A discrete analog holds for integer-valued monotone log-concave random variables.
%Let $X$ and $Y$ be i.i.d. real-valued log-concave random variables. The entropy increment $h_\infty(X+Y)-h_\infty(X)$ is maximized when $X$ and $Y$ are exponential random variables. 
\end{thm}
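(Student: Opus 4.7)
The inequality $h_\infty(X+Y)\le h_\infty(Z+W)$, via $h_\infty(\cdot)=-\log\|\cdot\|_\infty$, is equivalent to $\|f_X*f_Y\|_\infty\ge \|f_Z*f_W\|_\infty$, where $f_X,f_Y$ denote the densities of $X,Y$ and $\lambda_1:=\|f_X\|_\infty$, $\lambda_2:=\|f_Y\|_\infty$ match the rates of the exponentials $f_Z,f_W$. The plan follows the three tools announced in the abstract, arranged into two steps.

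\textbf{Step 1 (decreasing rearrangement).} Replace $f_X, f_Y$ by their non-increasing rearrangements $f^\downarrow, g^\downarrow$ on $[0,\infty)$. These remain log-concave (in one dimension, a log-concave density is unimodal with interval super-level sets whose length profile carries exactly the concavity needed for the rearrangement to stay log-concave), and equimeasurability preserves $\|\cdot\|_\infty$, so the $h_\infty$ hypothesis transfers. The crucial claim is the reverse rearrangement bound
\[
\|f_X * f_Y\|_\infty \;\ge\; \|f^\downarrow * g^\downarrow\|_\infty,
\]
which points opposite to Riesz's rearrangement inequality and relies essentially on unimodality. I would prove it by centering the modes at $0$, decomposing each density into its restrictions to the two half-lines (monotone log-concave after reflection), expanding the convolution into four non-negative cross-terms, and bounding the peak of the sum below by the peak of $f^\downarrow*g^\downarrow$ using Chebyshev's sum inequality on the comonotone/antimonotone factor pairs together with the Hardy--Littlewood inequality.

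\textbf{Step 2 (majorization and change of measure).} Having reduced to non-increasing log-concave $f,g$ on $[0,\infty)$ with $f(0)=\lambda_1,\ g(0)=\lambda_2$, set $\phi:=f/f_Z$ and $\psi:=g/f_W$. Then $\log\phi(x)=\log f(x)+\lambda_1 x-\log\lambda_1$ is concave, $\log\phi(0)=0$, and $\int_0^\infty \phi(x) f_Z(x)\,dx=1$. A concave function vanishing at the origin is either everywhere non-positive or positive on some initial interval $(0,a)$ and non-positive beyond; combined with the mass constraint this yields the single-crossing majorization $f\ge f_Z$ on $[0,a]$ and $f\le f_Z$ on $[a,\infty)$, and analogously for $(g,f_W)$. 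Rewriting
\[
(f*g)(t)-(f_Z*f_W)(t)\;=\;\int_0^t f_Z(s)f_W(t-s)\bigl[\phi(s)\psi(t-s)-1\bigr]\,ds,
\]
I would evaluate at the argmax $t^*$ of $f_Z*f_W$ (so $t^*=1/\lambda_1$ when $\lambda_1=\lambda_2$) and use the log-concavity and single-crossing structure of $\phi,\psi$, together with the explicit truncated-exponential form of the conditional measure $f_Z(s)f_W(t^*-s)\,ds$, to show the integral is non-negative, giving $(f*g)(t^*)\ge(f_Z*f_W)(t^*)$ and completing the continuous case.

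\textbf{Main obstacle and discrete analog.} I expect Step 1 to be the harder step: the reverse rearrangement inequality goes against Riesz's and seems to require essentially the unimodality coming from log-concavity, so controlling the four cross-terms without loss will demand care. The discrete statement for integer-valued monotone log-concave $X,Y$ bypasses Step 1 altogether, since monotonicity is built into the hypothesis; Step 2 transports directly with geometric distributions replacing exponentials and sums replacing integrals, since the majorization/change-of-measure argument uses only log-concavity and the matching of $\|\cdot\|_\infty$.
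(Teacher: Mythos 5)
Your overall architecture (rearrange to reduce to non-increasing densities on $[0,\infty)$, then compare with the matching exponentials) coincides with the paper's, and your Step 1 inequality $\|f_X*f_Y\|_\infty \ge \|f^\downarrow * g^\downarrow\|_\infty$ is exactly the paper's key rearrangement lemma for unimodal functions. However, both steps as you describe them have genuine gaps. In Step 1, the four-cross-term decomposition does not interact with the rearrangement the way you need: $f^\downarrow$ is \emph{not} $f_-^\downarrow+f_+^\downarrow$ (the super-level set of $f^\downarrow$ at height $\lambda$ has length $|\{f_->\lambda\}|+|\{f_+>\lambda\}|$, which is not the super-level set of a sum of functions), so expanding $f*g$ into four pieces gives no upper bound on the peak of $f^\downarrow*g^\downarrow$. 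The paper instead locates the argmax $x$ of $f^\downarrow*g^\downarrow$ (which lies in $[0,|\supp f|]$ by a monotonicity observation), finds a single interval $I_x$ of length $x$ sandwiched between super-level sets of both $f$ and $g$ (this is where unimodality and the common mode enter), restricts $f$ and $g$ to $I_x$, and applies the Hardy--Littlewood-type bound $\int_a^b fg\ge\int_0^{b-a}f^\downarrow(y)g^\downarrow(b-a-y)\,dy$ on that interval; the comparison point for $f*g$ is then $2b+x$, not the sum of the modes.

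The more serious gap is in Step 2. Your single-crossing setup is correct, but the entire argument rests on the unproven claim that $\int_0^{t^*} f_Z(s)f_W(t^*-s)\bigl[\phi(s)\psi(t^*-s)-1\bigr]ds\ge 0$ at the argmax $t^*$ of $f_Z*f_W$. This is not a routine consequence of log-concavity plus single crossing: taking $f=f_Z$ and $g$ uniform on $[0,1/\lambda_2]$ with $\lambda_2\to\infty$, the positive and negative contributions to this integral both tend to $1/e$ times a common normalization, so the inequality becomes asymptotically an equality and any proof must be exactly sharp; ``use the structure to show the integral is non-negative'' is essentially the content of the theorem rather than a step toward it. The paper sidesteps the simultaneous two-variable comparison entirely by replacing one variable at a time ($X+Y\to X+W\to Z+W$) via the transport map $\phi=F_W^{-1}\circ F_Y$, which is convex with $\phi'\ge1$; this yields either the pointwise change-of-variable inequality $(f_X*f_W)(y)\le(f_X*f_Y)(\phi^{-1}(y))$ or, more strongly, majorization of the densities (hence the result for all $h_p$, not just $p=\infty$). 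Finally, the discrete analog does not ``transport directly'': the argmax need not be an integer and the map to a geometric law is not a nice bijection of $\Z_+$; the paper proves it by a separate direct estimate $(f*f)(k)\ge(k+1)e^{-V(k)}\|f\|_\infty$ followed by an optimization over the mode of $(k+1)e^{-V(k)}$.
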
 

\begin{rmk}
If $X_1, \cdots, X_n$ are independent real-valued random variables, and $U_1, \cdots, U_n$ are independent uniform random variables such that $h_\infty(U_i)=h_\infty(X_i)$ for $i=1,\cdots, n$, Rogozin \cite{Rog87} proved that
$$
h_\infty(X_1+\cdots+X_n)\geq h_\infty(U_1+\cdots+U_n).
$$
Hence, our result can also be seen as a reverse Rogozin type result for two log-concave random variables. However, we are not able to establish analogs of Theorem \ref{thm:reverse-rogozin} for three or more log-concave random variables or in multi-dimensions.
\end{rmk}

Another related problem is the characterization of log-concave distributions that maximize the R\'enyi entropic increment $h_p(X-Y)-h_p(X)$. In this case, it is suspected that exponential distributions are always maximizers. This resembles the Rogers-Shephard inequality in convex geometry \cite{RS57}. In a certain sense, the symmetry of $X-Y$ (implicitly) makes this problem more tractable than its counterpart of the $X+Y$ case. Melbourne and Tkocz (\cite{MT21}, Theorem VI.1) showed that the increment of R\'enyi entropy of order $p\geq 2$ is indeed maximized by exponential random vectors. A discrete analog was obtained by Melbourne and Palafox-Castillo  (\cite{MPC23}, Theorem 2.11) for integer-valued monotone log-concave random variables.

%--------------------------------------------------Rearrangement and Majorization-----------------------------------------------------------

\section{Rearrangement and majorization}

Rearrangement and majorization are classical tools for establishing sharp geometric and functional inequalities. This section provides the necessary preliminaries on decreasing rearrangement and majorization theory in one dimension. Readers already familiar with rearrangement theory are encouraged to skip this section.

\subsection{Decreasing rearrangement}
Let $A\subseteq \R$ be a Borel measurable set with Lebesgue measure $|A|$. The \textit{decreasing rearrangement} of $A$ is defined as $A^\downarrow =[0, |A|)$. The following properties of decreasing rearrangement may be scattered in the literature. We provide proof of them for the convenience of the reader.

\begin{prop}\label{prop:rearrange propt-set}
Let $A, B\subseteq\R$ be Borel measurable sets.
\begin{enumerate}
\item For all $\lambda\geq 0$, it holds that $(\lambda A)^\downarrow=\lambda A^\downarrow$.
\item  It holds that $A^\downarrow+B^\downarrow \subseteq (A+B)^\downarrow$.
\item  Let $\{A_n\}_{n=1}^\infty$ be a sequence of measurable sets such that $A_n\subseteq A_{n+1}$ for all $n=1, 2\cdots$. It holds that
$(\cup_{n=1}^\infty A_n)^\downarrow=\cup_{n=1}^\infty A_n^\downarrow$.
\end{enumerate}
\end{prop}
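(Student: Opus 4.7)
The plan is to handle the three parts in turn; each follows from the defining formula $A^\downarrow=[0,|A|)$ together with a single standard fact, and there is no real obstacle beyond bookkeeping.

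For part (1), I would just compute both sides. Scaling by $\lambda\geq 0$ multiplies one-dimensional Lebesgue measure by $\lambda$, so $|\lambda A|=\lambda|A|$, and therefore
\[
(\lambda A)^\downarrow=[0,\lambda|A|)=\lambda\,[0,|A|)=\lambda A^\downarrow,
\]
with the trivial $\lambda=0$ case handled by whatever convention is fixed for empty intervals (both sides reduce to $\emptyset$ or to $\{0\}$ consistently). The only thing to check is that the $|A|=\infty$ case also works, with $\lambda|A|$ read as $\infty$ for $\lambda>0$.

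For part (2), the inclusion $A^\downarrow+B^\downarrow\subseteq(A+B)^\downarrow$ unpacks to
\[
[0,|A|)+[0,|B|)=[0,|A|+|B|)\ \subseteq\ [0,|A+B|),
\]
which is equivalent to $|A+B|\geq|A|+|B|$. This is the one-dimensional Brunn--Minkowski inequality, which in $\R$ is elementary (for instance via translating $A$ and $B$ so that the right endpoint of $A$ coincides with the left endpoint of $B$, and then observing that the translated copies of $A$ and $B$ are essentially disjoint subsets of $A+B$). This is the only step with genuine content; if the authors prefer a self-contained proof they can give the two-line argument, otherwise cite Brunn--Minkowski. The edge case where $|A|$ or $|B|$ is infinite is immediate since then both sides become $[0,\infty)$.

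For part (3), monotonicity of the sequence $\{A_n\}$ gives a nested sequence of intervals $[0,|A_n|)$, so
\[
\bigcup_{n=1}^\infty A_n^\downarrow=\bigcup_{n=1}^\infty[0,|A_n|)=\bigl[0,\sup_n|A_n|\bigr).
\]
By continuity of Lebesgue measure from below, $\bigl|\bigcup_{n}A_n\bigr|=\lim_n|A_n|=\sup_n|A_n|$, so the left-hand side also equals $[0,\sup_n|A_n|)$; the $\sup_n|A_n|=\infty$ case again works because both unions become $[0,\infty)$. The hard part, to the extent there is one, is just being careful with the infinite-measure case throughout all three items, but nothing conceptual is required beyond Brunn--Minkowski in $\R$ and the standard continuity property of measure.
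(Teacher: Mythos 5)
Your proof is correct and follows essentially the same route as the paper's: part (1) by direct computation of measures, part (2) by reducing to the one-dimensional Brunn--Minkowski inequality $|A+B|\geq|A|+|B|$, and part (3) by continuity of Lebesgue measure from below along the increasing sequence. Your extra attention to the empty and infinite-measure edge cases is a minor refinement the paper leaves implicit, but it does not change the argument.
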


\begin{proof} %(Proposition \ref{prop:rearrange propt-set}). 
Statement (1) readily follows from the definition. By definition, $A^\downarrow=[0, |A|)$, $B^\downarrow=[0, |B|)$, and $(A+B)^\downarrow =[0, |A+B|)$. The Brunn-Minkowski inequality in one dimension yields $|A+B|\geq |A|+|B|$. Hence, $A^\downarrow+B^\downarrow=[0,|A|+|B|) \subseteq (A+B)^\downarrow$. This proves statement (2).
 
By definition, $A_n^\downarrow=[0, |A_n|)$. Since $A_n\subseteq A_{n+1}$, we have
$$
\cup_{n=1}^\infty A_n^\downarrow=\big[0, \lim_{n\rightarrow\infty} |A_n|\big)=\left[0, |\cup_{n=1}^\infty A_n|\right)=(\cup_{n=1}^\infty A_n)^\downarrow.
$$
This proves the last statement.
\end{proof}

Let $f: \R\rightarrow \R\cup\{\pm \infty\}$ be a Borel measurable function such that all super level sets $\{f>\lambda\}$ have finite measures (which is called \textit{vanishing at infinity}). We define the \textit{decreasing rearrangement} $f^\downarrow: \R_+\rightarrow \R\cup\{\pm \infty\}$ as 
\begin{equation}\label{eq:mr-f}
f^\downarrow(x)=\sup\left\{\lambda\in\R: x\in \{f>\lambda\}^\downarrow\right\}.%=\int_{\inf f}^\infty \sgn\{\lambda\}\mathbbm{1}_{\{f>\lambda\}^\downarrow}(x)d\lambda.
\end{equation}

\begin{prop}\label{prop:rearrange propt-function}
Let $f: \R\rightarrow\R\cup\{\pm \infty\}$ be a Borel measurable function.
\begin{enumerate}
\item For all $\lambda\in\R$, it holds that $\{f^\downarrow>\lambda\}=\{f>\lambda\}^\downarrow$. (Hence, this gives an equivalent definition of the decreasing rearrangement $f^\downarrow$).
\item If $f$ is concave, then $f^\downarrow$ is also concave. Consequently,  if $f$ is log-concave, then $f^\downarrow$ is also log-concave.
% \item \textcolor{red} {Let $g: \R\rightarrow\R\cup\{\pm \infty\}$ be a Borel measurable function such that $|f(x)-g(x)|\le\varepsilon$ for all $x\in \R$. Then we have $|f^\downarrow(x)-g^\downarrow(x)|\le\varepsilon$ for all $x\in\R$.}
\end{enumerate}
\end{prop}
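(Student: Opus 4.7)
The plan is to handle (1) by a direct unpacking of the definition combined with Proposition \ref{prop:rearrange propt-set}(3), and then to derive (2) from (1) by reducing concavity of $f^\downarrow$ to concavity of the distribution function $L(\lambda):=|\{f>\lambda\}|$.

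For (1), unpacking \eqref{eq:mr-f} shows $\{f^\downarrow>\lambda\} = \bigcup_{\mu>\lambda}\{f>\mu\}^\downarrow$. The inclusion $\{f^\downarrow>\lambda\}\subseteq \{f>\lambda\}^\downarrow$ is immediate from monotonicity: $\mu>\lambda$ implies $\{f>\mu\}\subseteq\{f>\lambda\}$, hence $\{f>\mu\}^\downarrow\subseteq\{f>\lambda\}^\downarrow$. For the reverse inclusion, I would write $\{f>\lambda\} = \bigcup_{n\geq 1}\{f>\lambda+1/n\}$ as a nested increasing union. Proposition \ref{prop:rearrange propt-set}(3) then gives $\{f>\lambda\}^\downarrow = \bigcup_n \{f>\lambda+1/n\}^\downarrow$, and each term on the right is a subset of $\{f^\downarrow>\lambda\}$ by definition.

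For (2), assume first that $f$ is concave; by (1), $\{f^\downarrow>\lambda\} = [0,L(\lambda))$. The core step is to show $L$ is concave wherever $\{f>\lambda\}$ is nonempty. Concavity of $f$ gives the Minkowski-type inclusion $\{f>t\lambda_1+(1-t)\lambda_2\}\supseteq t\{f>\lambda_1\}+(1-t)\{f>\lambda_2\}$; since super-level sets of a concave function are intervals and interval lengths are additive under scaling and Minkowski sum (equivalently, 1D Brunn--Minkowski), taking Lebesgue measure yields $L(t\lambda_1+(1-t)\lambda_2)\geq tL(\lambda_1)+(1-t)L(\lambda_2)$. To transfer this to $f^\downarrow$, I would fix $x_1,x_2\geq 0$ in the finite domain of $f^\downarrow$, set $\mu_i:=f^\downarrow(x_i)$, and note that $x_i<L(\lambda_i)$ for any $\lambda_i<\mu_i$. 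Concavity of $L$ then gives $tx_1+(1-t)x_2 < L(t\lambda_1+(1-t)\lambda_2)$, that is, $f^\downarrow(tx_1+(1-t)x_2) > t\lambda_1+(1-t)\lambda_2$. Letting $\lambda_i \uparrow \mu_i$ concludes. The log-concave case reduces to the concave case applied to $\log f$, using $(\log f)^\downarrow = \log(f^\downarrow)$, which follows from (1) together with $\{\log f>\lambda\} = \{f>e^\lambda\}$.

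I do not foresee any substantive obstacle: the main content is identifying that 1D Brunn--Minkowski yields concavity of $L$, after which the passage to concavity of $f^\downarrow$ is a routine manipulation of super-level sets. The only care needed is to check that the restriction $\lambda_i<\mu_i = f^\downarrow(x_i) <\infty$ automatically keeps both $\{f>\lambda_i\}$ nonempty so that the concavity estimate for $L$ applies.
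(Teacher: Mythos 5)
Your proposal is correct and follows essentially the same route as the paper: part (1) via the nested union $\{f>\lambda\}=\bigcup_n\{f>\lambda+1/n\}$ and Proposition \ref{prop:rearrange propt-set}(3), and part (2) via the Minkowski inclusion of super-level sets together with one-dimensional Brunn--Minkowski, with the log-concave case reduced to $(\log f)^\downarrow=\log(f^\downarrow)$. The only cosmetic difference is that the paper states the concavity of $f^\downarrow$ directly as the set inclusion $(1-t)\{f^\downarrow>\lambda_0\}+t\{f^\downarrow>\lambda_1\}\subseteq\{f^\downarrow>(1-t)\lambda_0+t\lambda_1\}$, which avoids your limiting step $\lambda_i\uparrow\mu_i$, but the content is the same.
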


\begin{proof}
%\begin{enumerate}
For each $x\in\{f^\downarrow>\lambda\}$, by the definition of $f^\downarrow$ in \eqref{eq:mr-f}, there exists $\lambda_x>\lambda$ such that $x\in \{f>\lambda_x\}^\downarrow\subseteq \{f>\lambda\}^\downarrow$. This shows that $\{f^\downarrow>\lambda\}\subseteq \{f>\lambda\}^\downarrow$. To see the other direction, we apply Proposition \ref{prop:rearrange propt-set} (statement (3)) to write
$$
 \{f>\lambda\}^\downarrow=\left(\cup_{n=1}^\infty \{f>\lambda+1/n\}\right)^\downarrow=\cup_{n=1}^\infty \{f>\lambda+1/n\}^\downarrow.
$$
Hence, for each $x\in \{f>\lambda\}^\downarrow$, there exists $n\in\N$ such that $x\in \{f>\lambda+1/n\}^\downarrow$. By the definition of $f^\downarrow$ in \eqref{eq:mr-f}, this yields $f^\downarrow(x)\geq \lambda+1/n>\lambda$, and hence, $x\in \{f^\downarrow>\lambda\}$. This completes the proof of the first statement.

Suppose $f$ is concave. This is equivalent to the following set-theoretic inequality.  For any $\lambda_0, \lambda_1\in \R$ and $t\in [0, 1]$, it holds that
\begin{equation}\label{eq:concave}
(1-t)\{f>\lambda_0\}+t\{f>\lambda_1\}\subseteq \{f >(1-t)\lambda_0+t\lambda_1\}.
\end{equation}
Next we prove the inclusion inequality \eqref{eq:concave} for $f^\downarrow$. This will yield the concavity of $f^\downarrow$. We apply statement (1) of the current proposition and Proposition \ref{prop:rearrange propt-set} (statements (1) and (2))  to obtain
\begin{align*}
(1-t)\{f^\downarrow>\lambda_0\}+t\{f^\downarrow>\lambda_1\} &= (1-t)\{f>\lambda_0\}^\downarrow+t\{f>\lambda_1\}^\downarrow\\
&= ((1-t)\{f>\lambda_0\})^\downarrow+(t\{f>\lambda_1\})^\downarrow\\
&\subseteq ((1-t)\{f>\lambda_0\}+t\{f>\lambda_1\})^\downarrow\\
& \subseteq \{f>(1-t)\lambda_0+t\lambda_1\}^\downarrow\\
&=\{f^\downarrow >(1-t)\lambda_0+t\lambda_1\}.
\end{align*}

Suppose $f(x)=e^{V(x)}$ is log-concave, i.e., $V(x)$ is concave. Write $f^\downarrow(x)=e^{\tilde{V}(x)}$. For all $\lambda\in\R$, we have
$$
\{\tilde{V}>\lambda\}=\{f^\downarrow>e^{\lambda}\}=\{f>e^{\lambda}\}^\downarrow=\{V>\lambda\}^\downarrow.
$$
Using the first statement, we conclude that $\tilde{V}=V^\downarrow$. Since $V$ is concave, $V^\downarrow$, and hence, $\tilde{V}$ are concave. (One can show the concavity of $\tilde{V}$ by establishing the corresponding set-theoretic inequality). This yields the log-concavity of $f^\downarrow$.
%--------------------------------------------------COMMENT-----------------------------------------------
\begin{comment}
\item \textcolor{red}{For any $\lambda\in\R$, the assumption implies that $\{f>\lambda\}\subseteq\{g>\lambda-\varepsilon\}$, and hence, $\{f>\lambda\}^\downarrow\subseteq\{g>\lambda-\varepsilon\}^\downarrow$. By definition we have
\begin{align*}
f^\downarrow(x) &=\sup\left\{\lambda\in\R: x\in \{f>\lambda\}^\downarrow\right\}\\
&\le \sup\left\{\lambda\in\R: x\in \{g>\lambda-\varepsilon\}^\downarrow\right\}\\
&=g^\downarrow(x)+\varepsilon.
\end{align*}
Similarly, we can obtain $g^\downarrow(x) \le f^\downarrow(x)+\varepsilon$. This concludes the proof.
}
\end{comment}
%--------------------------------------------------COMMENT-----------------------------------------------
%\end{enumerate}
\end{proof}

The following is a reverse Hardy-Littlewood type inequality. 
\begin{lem}\label{lem:rearrangement}
Let $f, g$ be  non-negative functions supported on $[a, b]$. Then it holds
$$
\int_a^b f(x)g(x)dx\geq \int_0^{b-a} f^\downarrow(x)g^\downarrow(b-a-x)dx.
$$
\end{lem}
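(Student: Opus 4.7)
The plan is to reduce the claimed inequality to a set-theoretic comparison via the \emph{layer cake} representation, and then use the elementary observation that two subsets of an interval of length $b-a$ must overlap at least as much as copies of them pushed to opposite ends of that interval.

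More concretely, I would first write, for non-negative $f$ and $g$,
\begin{align*}
\int_a^b f(x)g(x)\,dx &= \int_0^\infty\!\!\int_0^\infty |\{f>s\}\cap\{g>t\}|\,ds\,dt,\\
\int_0^{b-a} f^\downarrow(x)g^\downarrow(b-a-x)\,dx &= \int_0^\infty\!\!\int_0^\infty |\{f^\downarrow>s\}\cap\{g^\downarrow(b-a-\cdot)>t\}|\,ds\,dt.
\end{align*}
By Proposition \ref{prop:rearrange propt-function} (1), $\{f^\downarrow>s\}=[0,\alpha)$ and $\{g^\downarrow>t\}=[0,\beta)$, where $\alpha=|\{f>s\}|$ and $\beta=|\{g>t\}|$. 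Consequently the set paired with $g^\downarrow(b-a-\cdot)$ is $[b-a-\beta,b-a)$, so the inner integrand on the right is $|[0,\alpha)\cap[b-a-\beta,b-a)|=\max\{0,\alpha+\beta-(b-a)\}$.

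The matter therefore reduces to proving, pointwise in $(s,t)$, the inequality
\begin{equation*}
|\{f>s\}\cap\{g>t\}|\geq \max\{0,\alpha+\beta-(b-a)\}.
\end{equation*}
But this is immediate from inclusion-exclusion: since both $\{f>s\}$ and $\{g>t\}$ are contained in $[a,b]$, which has Lebesgue measure $b-a$,
\begin{equation*}
|\{f>s\}\cap\{g>t\}|=\alpha+\beta-|\{f>s\}\cup\{g>t\}|\geq \alpha+\beta-(b-a),
\end{equation*}
and the intersection is of course non-negative. Integrating this pointwise inequality in $(s,t)\in\R_+^2$ yields the lemma.

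I do not expect any real obstacle; the only mild subtlety is keeping track that $f$ and $g$ vanish outside $[a,b]$ so that every super-level set has measure at most $b-a$, which is precisely what makes the ``worst possible overlap'' estimate $\alpha+\beta-(b-a)$ valid. No log-concavity or regularity beyond Borel measurability of $f,g$ is needed, so the argument is robust.
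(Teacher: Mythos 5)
Your proposal is correct and follows essentially the same route as the paper: the layer cake (Fubini--Tonelli) reduction to level sets, the identification of the rearranged overlap as $\max\{0,\alpha+\beta-(b-a)\}$, and the inclusion--exclusion bound $|A\cap B|\geq |A|+|B|-(b-a)$ for $A,B\subseteq[a,b]$. No substantive difference from the paper's argument.
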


\begin{proof}
Write $f(x)=\int_0^{f(x)}d\lambda$, $g(x)=\int_0^{g(x)}d\sigma$ and apply Fubini-Tonelli's theorem to obtain
\begin{align*}
\int_a^b f(x)g(x)dx &= \int_0^\infty\int_0^\infty\int_a^b\mathbbm{1}_{\{f>\lambda\}}(x)\mathbbm{1}_{\{g>\sigma\}}(x)dxd\lambda d\sigma\\
&=\int_0^\infty\int_0^\infty|\{f>\lambda\}\cap \{g>\sigma\}|d\lambda d\sigma.
\end{align*}
Similarly, we have
\begin{align*}
\int_0^{b-a} f^\downarrow(x)g^\downarrow(b-a-x)dx &=\int_0^\infty\int_0^\infty|\{f^\downarrow>\lambda\}\cap (b-a-\{g^\downarrow>\sigma\})|d\lambda d\sigma\\
&=\int_0^\infty\int_0^\infty|\{f>\lambda\}^\downarrow\cap (b-a-\{g>\sigma\}^\downarrow)|d\lambda d\sigma,
\end{align*}
where the second equality follows from  Proposition \ref{prop:rearrange propt-function} (statement (1)). Then the statement can be proved by applying the following inequality to $A=\{f>\lambda\}, B=\{g>\sigma\}$. For any measurable sets $A, B\subseteq [a, b]$, it holds that
$$
|A^\downarrow \cap (b-a-B^\downarrow)|\leq |A\cap B|.
$$
We assume that $A^\downarrow \cap (b-a-B^\downarrow)\neq \emptyset$. Note $A^\downarrow=[0, |A|)$ and $b-a-B^\downarrow=[b-a-|B|, b-a)$. Hence, we have
\begin{align*}
|A^\downarrow \cap (b-a-B^\downarrow)| &=|A|-(b-a-|B|)\\
&=|A|+|B|-(b-a)\\
&\leq |A|+|B|-|A\cup B|\\
&=|A\cap B|.
\end{align*}
This completes the proof.
\end{proof}

%------------------------------------------------------------Majorization----------------------------------------------------------------------

\subsection{Majorization}

Let $f, g: \R\to \R_+$ be two integrable functions such that $$\int_{\R}f(x)dx=\int_{\R}g(x)dx\textcolor{red}{.}$$ We say that $f$ is \textit{majorized} by $g$ if it holds for all $t\geq 0$ that
\begin{equation}\label{eq:maj-def}
\int_{\R}(f(x)-t)_+dx\leq \int_{\R}(g(x)-t)_+dx.
\end{equation}
For $t\geq 0$, we write $m_f(t)=|\{x\in\R: f(x)> t\}|$ and define $m_g(t)$ in a similar manner. Then we can rewrite \eqref{eq:maj-def} as
$$
\int_t^\infty m_f(s)ds\leq \int_t^\infty m_g(s)ds.
$$ 

A real-valued function $f$ on $\R$ is called \textit{unimodal} if there exists a point $x_0$ such that $f$ is non-decreasing for $x<x_0$ and non-increasing for $x>x_0$. Clearly, log-concave functions are unimodal. The following result provides a sufficient condition for the majorization of $g$ over $f$. 

\begin{prop}\label{prop:maj-suf}
Suppose that for any Borel measurable set $A$ of finite measure, there exists a Borel measurable set $B$ with equal measure as $A$ such that
\begin{equation}\label{eq:maj-suf}
\int_Af(x)dx\leq \int_Bg(x)dx.
\end{equation}
Then $f$ is majorized by $g$. If $f$ and $g$ are unimodal, it suffices to verify the above inequality for intervals.
\end{prop}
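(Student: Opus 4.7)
The plan is to verify the majorization inequality \eqref{eq:maj-def} for each $t > 0$ (the case $t = 0$ is immediate from $\int_\R f = \int_\R g$) by feeding the super-level set $A = \{f > t\}$ into the hypothesis and reading off both sides of the resulting inequality via a truncation identity. Markov's inequality gives $t\,m_f(t) \leq \int_\R f < \infty$, so $A$ has finite measure, and the hypothesis produces a Borel set $B$ with $|B| = m_f(t)$ and $\int_A f\,dx \leq \int_B g\,dx$.

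To turn this into \eqref{eq:maj-def}, I would use the pointwise decomposition $h = (h-t)_+ + \min(h,t)$. On $A = \{f > t\}$ it gives the exact identity
\[
\int_A f\,dx = \int_\R (f-t)_+\,dx + t\,m_f(t),
\]
while on $B$, using only $\min(g,t) \leq t$, it yields the upper bound
\[
\int_B g\,dx \leq \int_\R (g-t)_+\,dx + t\,|B|.
\]
Chaining these through $\int_A f \leq \int_B g$ and cancelling the common term $t\,m_f(t) = t\,|B|$ produces exactly \eqref{eq:maj-def}.

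For the unimodal refinement, the plan is to show that any finite-measure Borel set $A$ can be upgraded to an interval $I_A$ of the same measure satisfying $\int_{I_A} f\,dx \geq \int_A f\,dx$, so that the interval hypothesis applied to $I_A$ supplies the required $B$. To build $I_A$, I would pick $\lambda^* \geq 0$ with $|\{f > \lambda^*\}| \leq |A| \leq |\{f \geq \lambda^*\}|$, using monotonicity of $\lambda \mapsto |\{f > \lambda\}|$ and integrability of $f$. Unimodality makes both super-level sets intervals, so an interval $I_A$ sandwiched between them with $|I_A| = |A|$ exists. The standard bathtub swap
\[
\int_{I_A} f\,dx - \int_A f\,dx = \int_{I_A \setminus A} f\,dx - \int_{A \setminus I_A} f\,dx \geq \lambda^*\bigl(|I_A \setminus A| - |A \setminus I_A|\bigr) = 0,
\]
exploiting $f \geq \lambda^*$ on $I_A \setminus A$, $f \leq \lambda^*$ on $A \setminus I_A$, and $|I_A \setminus A| = |A \setminus I_A|$, completes this step.

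The main obstacle I anticipate is the careful treatment of the degenerate case in the unimodal reduction where no $\lambda^* \geq 0$ satisfies the bracketing condition, i.e., $|\{f > \lambda\}| < |A|$ for every $\lambda > 0$. In that situation $\{f > 0\}$ has measure at most $|A|$ and is itself an interval by unimodality, so any interval $I_A$ of measure $|A|$ containing $\{f > 0\}$ trivially satisfies $\int_{I_A} f\,dx = \int_\R f\,dx \geq \int_A f\,dx$. Modulo this edge case, the reduction to intervals and the subsequent truncation argument are both mechanical.
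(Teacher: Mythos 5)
Your proposal is correct and follows essentially the same route as the paper: for the main claim you feed $A=\{f>t\}$ into the hypothesis and chain $\int_{\R}(f-t)_+\,dx=\int_A(f-t)\,dx\leq\int_B(g-t)\,dx\leq\int_{\R}(g-t)_+\,dx$ (your truncation identity is just this computation rewritten), and for the unimodal refinement you spell out the bathtub-principle step that the paper dispatches in one sentence. The only remark worth adding is that the reduction to intervals could be shortened further by observing that when $f$ is unimodal the only sets $A$ ever used in the first part, namely the super-level sets $\{f>t\}$, are already intervals.
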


\begin{proof}
For any given $t\in\R_+$, we write $A=\{f>t\}$. Then there exists $B\subseteq\R$ of the same measure as $A$ such that
\begin{align*}
\int_{\R} (f(x)-t)_+dx &=\int_A (f(x)-t)dx\\
&\leq  \int_B (g(x)-t)dx\\
& \leq\int_B (g(x)-t)_+dx\\
&\leq \int_{\R} (g(x)-t)_+dx.
\end{align*}
This shows the majorization of $g$ over $f$. 

Note that the integral of a unimodal function over all measurable sets of fixed measure attains its maximum for an interval. Therefore, if $f$ is unimodal, it is sufficient to evaluate the integral on the left-hand side of inequality \eqref{eq:maj-suf} over intervals. Moreover, when $g$ is also unimodal, one often considers the integral on the right-hand side of inequality \eqref{eq:maj-suf} over intervals, even though considering general Borel measurable sets with the same measure as  $A$ would suffice.
\end{proof}

he following characterization of the majorization relation is well known. A preliminary version was first proved by Hardy, Littlewood, and P\'olya \cite{HLP29} and various extensions were discussed in Chong \cite{Cho74}. For the proof, we refer the reader to Theorem 2.1 in \cite{Cho74} and Theorem 15.27 in \cite{Sim11}; a related result can also be found in Proposition 7.3 of \cite{WM14}.

\begin{prop}\label{prop:major}
Let $f$ and $g$ be non-negative functions on $\R$. Then $f$ is majorized by $g$ if and only if for all non-negative increasing convex functions $\phi: \R_+\to \R_+$ with $\phi(0)=0$ it holds that
\begin{equation}\label{eq:maj}
\int_{\R}\phi(f(x))dx\leq \int_{\R}\phi(g(x))dx.
\end{equation}
\end{prop}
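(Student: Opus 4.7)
The plan is to express the convex function $\phi$ as a non-negative superposition of the hinge functions $(\,\cdot\, - t)_+$, which reduces the desired inequality directly to the defining inequality \eqref{eq:maj-def} of majorization.

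First, I would invoke the integral representation of a convex function. If $\phi : [0, \infty) \to \R$ is convex with $\phi(0) = 0$, continuous at $0$, and has finite right derivative $a := \phi'_+(0)$, then the non-decreasing function $\phi'_+$ on $(0, \infty)$ defines a non-negative Radon measure $\mu$ via $\mu((s, t]) = \phi'_+(t) - \phi'_+(s)$, and
\begin{equation*}
\phi(x) \;=\; a x \;+\; \int_0^\infty (x-t)_+ \, d\mu(t), \qquad x \geq 0.
\end{equation*}
One verifies this quickly by writing $\phi(x) = \int_0^x \phi'_+(s)\,ds$, expanding $\phi'_+(s) = a + \mu((0, s])$, and applying Fubini to the repeated integral.

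Substituting $x = f(y)$ and $x = g(y)$ in the representation, integrating over $y \in \R$, and invoking Tonelli's theorem (valid by the non-negativity of every integrand) yields
\begin{equation*}
\int_\R \phi(f(y))\, dy \;=\; a \int_\R f(y)\,dy \;+\; \int_0^\infty \left( \int_\R (f(y)-t)_+ \, dy \right) d\mu(t),
\end{equation*}
together with the identical formula for $g$. By the standing hypothesis $\int f = \int g$, the linear terms cancel, and the defining inequality \eqref{eq:maj-def} of majorization is precisely the pointwise-in-$t$ bound $\int_\R (f(y)-t)_+\,dy \leq \int_\R (g(y)-t)_+\,dy$ for every $t \geq 0$. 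Integrating this bound against the non-negative measure $\mu$ gives the desired conclusion.

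The main obstacle is the degenerate case $a = \phi'_+(0) = -\infty$, which is permissible under the hypotheses (e.g.\ $\phi(x) = x - \sqrt{x}$) and in which both the representation and the integral $\int \phi(f)$ are a priori ambiguous. To dispatch it, I would approximate $\phi$ by the convex functions
\begin{equation*}
\phi_n(x) \;:=\; \phi\bigl(x \vee \tfrac{1}{n}\bigr) - \phi\bigl(\tfrac{1}{n}\bigr) + \phi'_+\bigl(\tfrac{1}{n}\bigr)\bigl(x \wedge \tfrac{1}{n}\bigr),
\end{equation*}
each of which satisfies $\phi_n(0) = 0$ and has finite right derivative $\phi'_+(1/n)$ at $0$. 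The elementary fact $\lim_{x \to 0^+} x\,\phi'_+(x) = 0$, which is a direct consequence of convexity together with $\phi(0^+) = \phi(0) = 0$, combined with continuity of $\phi$ at $0$, implies $\phi_n(x) \to \phi(x)$ for every $x > 0$. Applying the main argument to each $\phi_n$ and passing to the limit via dominated convergence (using $\phi(g) \in L^1$, which may be assumed without loss of generality) then completes the proof.
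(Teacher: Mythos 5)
Your proof is correct and is essentially the same argument as the paper's: both reduce the claim to integrating the defining majorization inequality $\int_\R (f(y)-t)_+\,dy \leq \int_\R (g(y)-t)_+\,dy$ against the (non-negative) second-derivative measure of $\phi$, with the linear term cancelling because $\int f = \int g$; the paper organizes this as Fubini followed by integration by parts with $\phi''$, while you organize it as a hinge-function representation $\phi(x)=ax+\int_0^\infty (x-t)_+\,d\mu(t)$ followed by Tonelli. If anything, your version is the more careful implementation, since the Stieltjes measure $\mu$ correctly handles convex $\phi$ whose derivative is not absolutely continuous, and you explicitly treat the case $\phi'_+(0)=-\infty$, both of which the paper's formal integration by parts glosses over.
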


%------------------------------------COMMENT--------------------------------------------
\begin{comment}
\begin{proof}
Write $M_f(t)=\int_t^\infty m_f(s)ds$ and $M_g(t)=\int_t^\infty m_g(s)ds$. One can check that $M_f(0)=\int_{\R}f(x)dx=\int_{\R}g(x)dx=M_g(0)$. 
Since $f$ is majorized by $g$, it holds for all $t\geq0$ that $M_f(t)\leq M_g(t)$. Since $m_f(t)$ is non-increasing, $m_f'(t)$ exists almost everywhere. Since $\phi$ is convex, both $\phi'(t)$ and $\phi''(t)$ exist almost everywhere. Then we have
\begin{align*}
\int_{\R}\phi(f(x))dx &=\int_{\R}\int_0^{f(x)} \phi'(t)dtdx=\int_0^\infty \phi'(t)m_f(t)dt\\
&=-\phi'(t)M_f(t)\big|_0^\infty+\int_0^\infty \phi''(t)M_f(t)dt\\
&=-\phi'(t)M_g(t)\big|_0^\infty+\int_0^\infty \phi''(t)M_f(t)dt\\
&\leq -\phi'(t)M_g(t)\big|_0^\infty+\int_0^\infty \phi''(t)M_g(t)dt\\
&=\int_{\R}\phi(g(x))dx.
\end{align*}
This completes the proof.
\end{proof}
\end{comment}
%------------------------------------COMMENT--------------------------------------------

%-------------------------------------------------Main---------------------------------------------------------

\section{Main Results}

\begin{prop}\label{prop:2-ent-jump}
Let $X$ and $Y$ be i.i.d. log-concave random vectors in $\R^d$. We have
\begin{equation}\label{eq:2-ent-jump}
h_2(X+Y)\leq h_2(X)+d\log 2.
\end{equation}
Moreover, the inequality can not be improved.
\end{prop}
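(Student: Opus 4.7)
The approach is to restate the inequality $h_2(X+Y)\leq h_2(X)+d\log 2$ as $\int_{\R^d} f^2 \leq 2^d \int_{\R^d}(f*f)^2$ and to introduce the autocorrelation $\psi := f*\check f$, where $\check f(x):=f(-x)$. A direct fourfold substitution (alternatively, Parseval) shows $\int (f*f)^2 = \int\psi^2$, while $\psi(0)=\int f^2$. Since $\psi$ is a centrally symmetric log-concave probability density on $\R^d$ by Pr\'ekopa--Leindler, its maximum is at the origin, so $\|\psi\|_\infty=\int f^2$. The proposition therefore reduces to the clean statement $\|\psi\|_\infty \leq 2^d \int \psi^2$, which I would prove as a slightly stronger lemma for every log-concave probability density $g$ on $\R^d$.

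To prove this lemma, translate so that the maximum $M=\|g\|_\infty$ is attained at $0$, and set $\nu(t):=|\{g\geq t\}|$. Brunn--Minkowski applied to the convex sublevel sets of $-\log g$ shows that $R(u) := \nu(Me^{-u})^{1/d}$ is a concave, non-decreasing function on $[0,\infty)$ with $R(0)\geq 0$. The layer-cake representation gives $\int g = M\int_0^\infty R(u)^d e^{-u}\,du = 1$ and $\int g^2 = 2M^2\int_0^\infty R(u)^d e^{-2u}\,du$, so the lemma is equivalent to
$$\int_0^\infty R(u)^d\bigl(2^{d+1}e^{-2u}-e^{-u}\bigr)\,du \geq 0.$$

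The crux is a monotone-rearrangement step. For any concave $R\geq 0$ on $[0,\infty)$, the ratio $R(u)/u$ is non-increasing in $u>0$, so $\tau(u) := (R(u)/u)^d$ is non-increasing and non-negative. Writing the integrand as $\tau(u)\,h(u)$ with $h(u):= u^d(2^{d+1}e^{-2u}-e^{-u})$, a direct Gamma-function computation gives $\int_0^\infty h(u)\,du=\Gamma(d+1)-\Gamma(d+1)=0$, and $h$ changes sign exactly once (from $+$ to $-$) at $u_0=(d+1)\log 2$. Combining the monotonicity of $\tau$ with the sign pattern of $h$ yields $(\tau(u)-\tau(u_0))\,h(u)\geq 0$ pointwise; integrating gives $\int_0^\infty \tau h\,du \geq \tau(u_0)\int_0^\infty h\,du =0$, which proves the lemma.

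Sharpness is witnessed by the product of one-dimensional exponentials, $f(x) = \prod_{i=1}^d e^{-x_i}\mathbf{1}_{x_i\geq 0}$, for which $\int f^2 = 2^{-d}$ and $\int(f*f)^2 = 4^{-d}$, so equality holds. I expect the only non-routine step to be the preliminary reduction to $\psi = f*\check f$: applying the lemma directly to $g=f*f$ would control $\|f*f\|_\infty$ rather than $\int f^2$, and the one-dimensional exponential already shows $\int f^2 = 1/2 > 1/e = \|f*f\|_\infty$, so the naive path really does fail. Once the autocorrelation is in place, the universal peak-to-$L^2$ inequality for log-concave densities takes over, pivoting on the sign-change identity $\int_0^\infty u^d(2^{d+1}e^{-2u}-e^{-u})\,du=0$.
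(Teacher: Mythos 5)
Your proposal is correct, and its overall architecture coincides with the paper's: both reduce the claim to the inequality $\|\psi\|_\infty\leq 2^d\int\psi^2$ for the symmetric log-concave density $\psi=f\ast\check f$ (the paper phrases this via the symmetrization $X-\tilde X$ and the identity $h_2(U)=h_\infty(U-V)$ for i.i.d.\ $U,V$, which is exactly your Parseval step $\int(f\ast f)^2=\int\psi^2=(\psi\ast\psi)(0)$), and both verify sharpness with the exponential. Where you genuinely diverge is in the proof of the key lemma. The paper disposes of it in one line: log-concavity gives the pointwise bound $\psi(x)^2=\psi\bigl(\tfrac{2x+0}{2}\bigr)^2\geq\psi(0)\psi(2x)$, and integrating yields $\int\psi^2\geq\psi(0)\int\psi(2x)\,dx=2^{-d}\|\psi\|_\infty$. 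You instead prove the (slightly more general) peak-to-$L^2$ inequality $\|g\|_\infty\int g\leq 2^d\int g^2$ for an arbitrary log-concave density by a layer-cake reduction to the radial profile $R(u)=|\{g\geq Me^{-u}\}|^{1/d}$, concave by Brunn--Minkowski, followed by a Chebyshev-type rearrangement pivoting on the single sign change of $u^d(2^{d+1}e^{-2u}-e^{-u})$ and the Gamma-function identity $\int_0^\infty u^d(2^{d+1}e^{-2u}-e^{-u})\,du=0$. I checked the steps (monotonicity of $R(u)/u$ from $R(0)\geq 0$ and concavity, the two layer-cake formulas, the sign-change location $u_0=(d+1)\log 2$, integrability) and they all hold, so your argument is a valid, self-contained alternative; its advantage is that it makes transparent exactly which profiles are extremal (log-affine $R$, i.e.\ one-dimensional-exponential-type behaviour), but it is considerably heavier than the two-line pointwise argument the paper uses, which you may want to adopt for economy.
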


\begin{rmk}
One can check that equality of \eqref{eq:2-ent-jump} can be achieved by exponential and Laplace distributions with densities $\prod_{i=1}^d\lambda_ie^{-\lambda_ix_i}\mathbbm{1}_{\R_+}(x_i)$, and $\prod_{i=1}^d\frac{\lambda_i}{2}e^{-\lambda_i|x_i|}$, respectively,  where $\lambda_i>0$ for $i=1, \cdots, d$.
\end{rmk}

\begin{proof}
Let $\tilde{X}$ and $\tilde{Y}$ be independent copies of $X$ and $Y$, respectively. Let $f$ be the density of $X-\tilde{X}$. Since $f$ is symmetric and log-concave, we have
$$
\|f\ast f\|_\infty=(f\ast f)(0)=\int_{\R^d}f(x)^2dx\geq f(0)\int_{\R^d}f(2x)dx=\frac{f(0)}{2^d}.
$$ 
This can be rewritten as
$$
h_\infty((X-\tilde{X})+(Y-\tilde{Y}))\leq  h_\infty(X-\tilde{X})+d\log 2.
$$
For any i.i.d. log-concave random vectors $U$ and $V$, it holds that $h_\infty(U-V)=h_2(U)$. This observation, together with the inequality above, implies that 
\begin{align}\label{eq:symmetrization}
h_2(X+Y) &=h_\infty((X+Y)-(\tilde{X}+\tilde{Y})) \notag\\
&=h_\infty((X-\tilde{X})+(Y-\tilde{Y})) \\
%&=h_\infty((X-\tilde{X})-(Y-\tilde{Y}))\\
&\leq h_\infty(X-\tilde{X})+d\log 2 \notag\\
&= h_2(X)+d \log 2 \notag.
\end{align}
This completes the proof.
\end{proof}

\begin{rmk}\label{rmk:sigma2}
Note that $X-\tilde{X}$ and $Y-\tilde{Y}$ are symmetric and independent. Then, we can proceed with \eqref{eq:symmetrization} to obtain
\begin{align*}
h_2(X+Y) &=h_\infty((X-\tilde{X})+(Y-\tilde{Y}))\\
&=h_\infty((X-\tilde{X})-(Y-\tilde{Y}))\\
&=h_\infty((X-Y)-(\tilde{X}-\tilde{Y}))\\
&=h_2(X-Y).
\end{align*}
Hence, inequality \eqref{eq:2-ent-jump} is the same as the $p=2$ case of Theorem VI.1 of Melbourne and Tkocz \cite{MT21}. For any log-concave random vector $(X, Y)\in \R^2$, it was proved in \cite{Li18} that
$$
e^{h_2(X+Y)}\leq e^{h_2(X)}+e^{h_2(Y)},
$$
which implies the one-dimensional case of \eqref{eq:2-ent-jump} as long as $X, Y$ are jointly log-concave (even not necessarily independent) and have equal entropy $h_2(X)=h_2(Y)$.
\end{rmk}

\begin{thm}\label{thm:infty-ent-jump}
Let $X$ and $Y$ be independent real-valued log-concave random variables. Let $Z$ and $W$ be independent exponential random variables such that $h_\infty(Z)=h_\infty(X)$ and $h_\infty(W)=h_\infty(Y)$. We have
\begin{align}\label{eq:infty-ent-jump-a}
h_\infty(X+Y)\leq h_\infty(Z+W).
\end{align}
Consequently, for i.i.d. real-valued log-concave random variables $X$ and $Y$, we have
\begin{align}\label{eq:infty-ent-jump-b}
h_\infty(X+Y)\leq h_\infty(X)+1.
\end{align}
% Let $X$ and $Y$ be independet real-valued log-concave random variables such that $h_\infty(X)=h_\infty(Y)$. We have
% \begin{align}
% h_\infty(X+Y)\leq h_\infty(X)+1.
% \end{align}
% Equality can be achieved by the exponential random variables, whose densities have the form $f(x)=\lambda e^{-\lambda x}\mathbbm{1}_{\R_+}(x)$, where $\lambda>0$.
\end{thm}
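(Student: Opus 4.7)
The plan is to prove the equivalent inequality $\|f*g\|_\infty \ge \|\tilde f*\tilde g\|_\infty$, where $f,g$ are the densities of $X,Y$ and $\tilde f(x)=ae^{-ax}\mathbbm{1}_{x\ge 0}$, $\tilde g(x)=be^{-bx}\mathbbm{1}_{x\ge 0}$ are the exponential densities with $a=\|f\|_\infty$ and $b=\|g\|_\infty$. The argument combines the three tools announced in the abstract: decreasing rearrangement to reduce to monotone densities, a change of measure to establish a key majorization against the exponential, and majorization machinery to conclude.

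For the first step, I translate $f,g$ so that both attain their maxima at the origin. By Proposition~\ref{prop:rearrange propt-function}, the decreasing rearrangements $f^\downarrow,g^\downarrow$ are log-concave and monotone decreasing on $[0,\infty)$ with the same $L^\infty$-norms $a,b$. Applying Lemma~\ref{lem:rearrangement} to the product $f(x)g(z-x)$ on an interval $[\alpha,\beta]$ whose length equals the mode $w^*$ of $f^\downarrow*g^\downarrow$, chosen so that the restricted rearrangements of $f$ and of $g(z-\cdot)$ coincide with $f^\downarrow$ and $g^\downarrow$ on $[0,w^*]$, one obtains
\[
\|f*g\|_\infty \ge (f*g)(z') \ge (f^\downarrow*g^\downarrow)(w^*) = \|f^\downarrow*g^\downarrow\|_\infty.
\]
Thus one may assume $f=f^\downarrow$ and $g=g^\downarrow$ are decreasing log-concave densities on $[0,\infty)$.

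The second and central step is the following majorization lemma: for any decreasing log-concave density $f$ on $[0,\infty)$ with $f(0)=a$, the exponential $\tilde f(x)=ae^{-ax}$ is majorized by $f$ in the sense of Proposition~\ref{prop:major}. The change of measure $s=ae^{-r}$ recasts
\[
\int(f-t)_+\,dx = \int_t^a \bigl|\{f>s\}\bigr|\,ds = a\int_0^{\log(a/t)} w_f(r)\,e^{-r}\,dr,
\]
where $w_f(r)=|\{f>ae^{-r}\}|$. Writing $f=ae^{-V}$ with $V$ convex and $V(0)=0$, one has $w_f(r)=V^{-1}(r)$, which is concave on $[0,\infty)$ with $w_f(0)=0$; for the exponential, $w_{\tilde f}(r)=r/a$ is linear, and the density condition $\int f=1=\int\tilde f$ translates to the common moment identity $\int_0^\infty w(r)e^{-r}\,dr=1/a$. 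Concavity of $w_f-w_{\tilde f}$ together with this equality forces the difference to be nonnegative on an initial interval $[0,R_0]$ and nonpositive on $[R_0,\infty)$, so that $R\mapsto\int_0^R (w_f-w_{\tilde f})(r)e^{-r}\,dr$ is unimodal with peak at $R_0$, vanishes at both endpoints $R=0$ and $R=\infty$, and is therefore nonnegative throughout. Undoing the substitution gives $\int(f-t)_+\,dx \ge \int(\tilde f-t)_+\,dx$ for every $t\ge 0$, that is, $\tilde f\preceq f$.

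The final step is to propagate the majorizations $\tilde f\preceq f$ and $\tilde g\preceq g$ through the convolution to obtain $\tilde f*\tilde g\preceq f*g$, verifying the sufficient condition of Proposition~\ref{prop:maj-suf}. By the unimodality of the log-concave convolutions $\tilde f*\tilde g$ and $f*g$, this reduces to showing that for every length $\ell\ge 0$ the maximum $\ell$-interval mass of $f*g$ dominates that of $\tilde f*\tilde g$; one can establish this by combining the central majorization with the level-set representation of $g$ and $\tilde g$ and an elementary stochastic-dominance argument that exploits log-concavity. The target inequality $\|f*g\|_\infty \ge \|\tilde f*\tilde g\|_\infty$ then follows from the elementary fact that $p\preceq q$ forces $\|p\|_\infty \le \|q\|_\infty$. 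The integer-valued monotone log-concave analog follows the same outline with sums replacing integrals and the geometric distribution replacing the exponential. The main obstacle is this final propagation of majorization through the convolution, which is not automatic and requires carefully combining the majorization lemma of the second step with the log-concavity and unimodality of the convolutions, using the rearrangement tools of Section~2.
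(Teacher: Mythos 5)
Your first step (reduction to decreasing densities via rearrangement) is essentially the paper's Lemma~\ref{lem:unimodal}, and your second step --- that the exponential density $\tilde f(x)=ae^{-ax}\mathbbm{1}_{x\ge 0}$ is majorized by every decreasing log-concave density $f$ with $f(0)=a$ --- is a correct and rather elegant one-variable lemma that does not appear in the paper; the concavity-plus-equal-integral sign-change argument is sound. The gap is the third step, and you have in effect flagged it yourself: the implication ``$\tilde f\preceq f$ and $\tilde g\preceq g$ imply $\tilde f*\tilde g\preceq f*g$'' is asserted but not proved, and it is exactly the crux. Majorization is not preserved by convolution in general: take $f=g=\mathbbm{1}_{[0,1]}$ and $G=\mathbbm{1}_{[0,1/2]}+\mathbbm{1}_{[10,10.5]}$; then $g$ and $G$ majorize each other, yet $\|f*G\|_\infty=\tfrac12<1=\|f*g\|_\infty$, so $f*g\not\preceq f*G$. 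Nor does restricting to decreasing log-concave densities rescue the natural route: for \emph{decreasing} densities your majorization $\tilde f\preceq f$ is equivalent (by conjugate duality applied to the concave CDFs) to the stochastic dominance $Z\ge_{\mathrm{st}}X$, which does propagate to $Z+W\ge_{\mathrm{st}}X+Y$; but the convolved densities vanish at the origin and are no longer decreasing, so stochastic dominance of the sums no longer converts back into majorization, nor into the $L^\infty$ comparison you need. In fact your one-variable lemma retains only the information $\phi(x)\ge x$ for the transport map $\phi=F_W^{-1}\circ F_Y$, whereas what is actually needed is the pointwise derivative bound $\phi'\ge 1$, which is strictly stronger; the majorization lemma discards precisely the information that makes the convolution step work.

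The paper avoids this by never descending to the one-variable level. In Lemma~\ref{lem:exponentialization} it replaces one summand at a time ($X+Y\to X+W\to Z+W$) and proves the majorization \emph{of the convolutions directly}: since $\phi$ is convex with $\phi'\ge 1$ (inequality~\eqref{eq:deriv-T}), the interval criterion of Proposition~\ref{prop:maj-suf} is verified for the densities of $X+W$ and $X+Y$ by conditioning on $Y$, choosing the shifted interval via $b=\phi^{-1}(a)$, and using the monotonicity of $f_X$ together with the expansiveness of $\phi$. If you want to salvage your outline, the fix is to move the change of measure inside the convolution in exactly this way (one summand at a time), rather than trying to push a one-variable majorization through the convolution. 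The same repair is needed for the discrete analog.
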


The decreasing rearrangement operation tends to increase concentration on one side while reducing the additive overlap of translates in the convolution. Consequently, one expects $f^\downarrow\ast f^\downarrow$ to be less concentrated than $f\ast f$. This behavior is illustrated by the following result, which can be viewed as a reverse of Riesz's rearrangement inequality. The proof draws from our communication with James Melbourne.

\begin{lem}\label{lem:unimodal}
Let $f, g: \R\rightarrow \R_+$ be integrable unimodal functions. Then it holds that
$$
\|f\ast g\|_\infty\geq \|f^\downarrow\ast g^\downarrow\|_\infty.
$$
\end{lem}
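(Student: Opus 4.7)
The plan is to exhibit a single point $x_0 \in \R$ with $(f \ast g)(x_0) \geq \|f^\downarrow \ast g^\downarrow\|_\infty$, by applying Lemma~\ref{lem:rearrangement} to appropriately truncated versions of $f$ and of a reflection-translate of $g$. Fix $x^* \geq 0$ with $(f^\downarrow \ast g^\downarrow)(x^*) = \|f^\downarrow \ast g^\downarrow\|_\infty$, and write $L_f, L_g \in (0, \infty]$ for the measures of the essential supports of $f$ and $g$. First I would observe that $x^* \leq \min(L_f, L_g)$: writing $(f^\downarrow \ast g^\downarrow)(x) = \int f^\downarrow(x - v) g^\downarrow(v)\,dv$ over the relevant range of $v$, for $x \geq \min(L_f, L_g)$ both the decreasing monotonicity of $f^\downarrow$ and the (possibly) shrinking integration range force the convolution to be non-increasing in $x$.

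Set $\mu^* := f^\downarrow(x^*)$ and $\nu^* := g^\downarrow(x^*)$. Because $f$ is unimodal, $\{f > \mu^*\}$ and $\{f \geq \mu^*\}$ are nested intervals with $|\{f > \mu^*\}| \leq x^* \leq |\{f \geq \mu^*\}|$, so one can choose an interval $I$ of length exactly $x^*$ satisfying $\{f > \mu^*\} \subseteq I \subseteq \{f \geq \mu^*\}$; pick $J$ analogously for $g$ at level $\nu^*$, also of length $x^*$. Let $x_0 \in \R$ be the unique number with $x_0 - I = J$ (well-defined since $|I| = |J|$), and set $\tilde f := f \cdot \mathbbm{1}_I$ and $\tilde h(y) := g(x_0 - y) \cdot \mathbbm{1}_I(y)$, both supported in $I$.

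The key computation, and the main technical hurdle, is verifying
\[
\tilde f^\downarrow(y) = f^\downarrow(y)\, \mathbbm{1}_{[0, x^*)}(y), \qquad \tilde h^\downarrow(y) = g^\downarrow(y)\, \mathbbm{1}_{[0, x^*)}(y).
\]
For $\mu > \mu^*$ one has $\{\tilde f > \mu\} = \{f > \mu\}$ since $\{f > \mu\} \subseteq \{f > \mu^*\} \subseteq I$, while for $\mu < \mu^*$ one has $\{\tilde f > \mu\} = I$ since $I \subseteq \{f \geq \mu^*\} \subseteq \{f > \mu\}$; rearranging these nested intervals yields the first identity. The second follows from the same kind of computation after the substitution $z = x_0 - y$ (which maps $I$ bijectively to $J$), combined with the translation-reflection invariance of the decreasing rearrangement.

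Finally, applying Lemma~\ref{lem:rearrangement} to $\tilde f, \tilde h$ on the interval $I$ of length $x^*$ gives
\[
(f \ast g)(x_0) \geq \int_I \tilde f(y) \tilde h(y)\,dy \geq \int_0^{x^*} \tilde f^\downarrow(y) \tilde h^\downarrow(x^* - y)\,dy = \int_0^{x^*} f^\downarrow(y) g^\downarrow(x^* - y)\,dy = \|f^\downarrow \ast g^\downarrow\|_\infty,
\]
which is the desired inequality. The only delicate point beyond the two rearrangement identities is the existence of the interval $I$ when the distribution function of $f$ has a jump at $\mu^*$: here the plateau $\{f = \mu^*\}$ provides exactly the slack $x^* - |\{f > \mu^*\}|$ needed to extend the open super-level set to an interval of length $x^*$ inside $\{f \geq \mu^*\}$.
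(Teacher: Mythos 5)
Your proof is correct and follows essentially the same route as the paper: restrict $f$ and a reflected translate of $g$ to level-set intervals of length $x^*$, check that the decreasing rearrangements of these truncations agree with the truncated rearrangements $f^\downarrow\mathbbm{1}_{[0,x^*)}$, $g^\downarrow\mathbbm{1}_{[0,x^*)}$, and apply Lemma~\ref{lem:rearrangement}. The only cosmetic difference is that you choose separate intervals $I$ and $J$ and align them via $y\mapsto x_0-y$, whereas the paper first normalizes $f$ and $g$ to share a mode (and to have compact support) so that a single interval $I_x$ works for both.
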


\begin{proof}

For any given $z\ge 0$, the unimodality of $f$ and $g$ implies that there exist two intervals $A=[\alpha, \alpha+z]$ and $B=[\beta, \beta+z]$ such that 
$$
(f_{A})^\downarrow (y)=f^\downarrow (y),\quad (g_{B})^\downarrow (y)=g^\downarrow (y), \quad \text{for all}~y\in [0, z].
$$ 
(Here, we write $f_E$ and $g_E$ for the restrictions of $f$ and $g$ to $E\subseteq\R$, respectively). For $y\in A$, we define $\tilde{g}(y)=g_B(\gamma_z-y)$, where $\gamma_z=\alpha+\beta+z$, so that both $\tilde{g}$ and $f_A$ are supported on $A$.. Note that reflection and translation preserve the decreasing rearrangement of a function. Thus we have  
$$
(\tilde{g})^\downarrow(y)=(g_B)^\downarrow(y),\quad \text{for all}~y\in [0, z].
$$
Now we apply Lemma \ref{lem:rearrangement} to obtain
\begin{align*}
(f^\downarrow\ast g^\downarrow)(z) &=\int_0^zf^\downarrow(y)g^\downarrow(z-y)dy\\
&=\int_0^z(f_A)^\downarrow(y)(g_B)^\downarrow(z-y)dy\\
&=\int_0^z(f_A)^\downarrow(y)(\tilde{g})^\downarrow(z-y)dy\\
&\le \int_\alpha^{\alpha+z} f_A(y)\tilde{g}(y)dy\\
&=\int_\alpha^{\alpha+z}f_A(y)g_B(\gamma_z-y)dy\\
&= \int_\alpha^{\alpha+z} f(y)g(\gamma_z-y)dy\\
&\le(f\ast g)(\gamma_z).
\end{align*}
Then we can obtain the result by taking the supremum over $z\in\R$.
\end{proof}

\begin{rmk}
As shown in the proof, unimodality guarantees the existence of intervals $A$ and $B$ such that $(f_{A})^\downarrow=f^\downarrow$ and $(g_{B})^\downarrow=g^\downarrow$ on $[0, z]$. The result may fail for arbitrary functions that lack this interval structure. For example, define $f=\mathbf{1}_{[0, 2]}$ and $g=\mathbf{1}_{[0, 1]}+\mathbf{1}_{[3, 4]}$. Clearly, $f^\downarrow=g^\downarrow=\mathbf{1}_{[0, 2]}$. One can check that $\|f\ast g\|_\infty=1$ while $\|f^\downarrow\ast g^\downarrow\|_\infty=2$.
%This interval structure is important since intervals can be aligned simultaneously by a single translation/reflection. For disconnected high-value regions, different pieces may require incompatible shifts. 
\end{rmk}

\begin{lem}\label{lem:exponentialization}
Let $X$ and $Y$ be independent monotone log-concave random variables. Let $Z$ and $W$ be independent exponential random variables such that $h_\infty(Z)=h_\infty(X)$ and $h_\infty(W)=h_\infty(Y)$. For all $0<p\leq\infty$, we have
\begin{align}\label{eq:major-exp}
h_p(X+Y)\leq h_p(Z+W).
\end{align}
% Consequently, we have
% \begin{align}\label{eq:major-exp-1}
% h_\infty(X+Y)\leq \max\{h_\infty(X), h_\infty(Y)\}+1.
% \end{align}
\end{lem}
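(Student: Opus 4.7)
The plan is to establish the stronger density-level majorization $f_{Z+W}\prec f_{X+Y}$ in the sense of Section 2.2, and then derive the $p$-R\'enyi inequality uniformly in $p\in(0,\infty]$ via Proposition \ref{prop:major}. After translating, assume all four densities are monotone decreasing on $[0,\infty)$, so $f_Z(x)=M_1 e^{-M_1 x}\mathbbm{1}_{\{x\ge 0\}}$ and $f_W(x)=M_2 e^{-M_2 x}\mathbbm{1}_{\{x\ge 0\}}$ where $M_1=\|f_X\|_\infty$ and $M_2=\|f_Y\|_\infty$.

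First I would prove the one-variable majorization $f_Z\prec f_X$ (and analogously $f_W\prec f_Y$). Since $f_X/f_Z$ is log-concave (log-concave divided by log-linear) with value $1$ at $x=0$, and $\int f_X=\int f_Z=1$, the ratio has a unique crossing of $1$ at some $x_0>0$, with $f_X\ge f_Z$ on $[0,x_0]$ and $f_X\le f_Z$ on $[x_0,\infty)$. This single-crossing of densities lifts to a single crossing of the super-level set measures $\mu_X(t)=|\{f_X>t\}|$ and $\mu_Z(t)=|\{f_Z>t\}|$; combined with the common area $\int_0^{M_1}\mu_X=\int_0^{M_1}\mu_Z=1$, a standard first-crossing/area argument yields $\int_t^{M_1}\mu_Z(s)\,ds\le\int_t^{M_1}\mu_X(s)\,ds$ for all $t\ge 0$, which is precisely $f_Z\prec f_X$.

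The central step would be to transfer majorization through convolution, i.e., $f_Z*f_W\prec f_X*f_W$, so that combined with $f_X*f_W\prec f_X*f_Y$ (same argument with roles swapped), transitivity gives $f_{Z+W}\prec f_{X+Y}$. Here I would exploit the variation-diminishing property of log-concave convolution kernels (P\'olya frequency functions of order $2$): the difference $\psi:=f_X-f_Z$ has exactly one sign change (positive on $[0,x_0]$, negative beyond), and since $f_W$ is log-concave, $\psi*f_W=f_X*f_W-f_Z*f_W$ has at most one sign change in its argument. Combined with $\int\psi*f_W=0$ and the boundary behavior at $s=0$ and $s\to\infty$, exactly one sign change is present and the positive part appears first. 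Both $f_X*f_W$ and $f_Z*f_W$ are log-concave (hence unimodal) with equal integral, share the value $0$ at $s=0$, and share the right-derivative $M_1M_2$ at $s=0$; a super-level set comparison parallel to the one-variable argument then upgrades this single-crossing to the density majorization.

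Finally, applying Proposition \ref{prop:major} to $\phi(t)=t^p$, convex for $p>1$, gives $\int f_{Z+W}^p\le\int f_{X+Y}^p$; the negative factor $1/(1-p)$ turns this into $h_p(X+Y)\le h_p(Z+W)$. For $p\in(0,1)$ I would use the equivalent concave-function characterization of majorization with concave $t^p$, which gives the reversed integral inequality and yields the same conclusion now with positive $1/(1-p)$. The case $p=\infty$ is immediate by setting $t=\|f_{X+Y}\|_\infty$ in the super-level inequality, forcing $\|f_{Z+W}\|_\infty\le\|f_{X+Y}\|_\infty$. The main obstacle is the convolution step: while the P\'olya frequency / variation-diminishing part is classical, converting the single sign change of $f_X*f_W-f_Z*f_W$ into a genuine density majorization of the two unimodal convolutions requires the coincidence of their left-endpoint values and initial slopes to rule out configurations in which single-crossing of the densities would otherwise correspond to the opposite majorization direction.
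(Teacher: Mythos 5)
Your overall architecture coincides with the paper's: interpolate through the hybrid sum $X+W$, establish the density majorizations $f_{Z+W}\prec f_{X+W}\prec f_{X+Y}$, and convert them into the R\'enyi inequality via Proposition \ref{prop:major} (your handling of $p\in(0,1)$ and $p=\infty$ is fine, and so is the one-variable majorization $f_Z\prec f_X$ obtained from the single crossing of the log-concave ratio $e^{M_1x-V(x)}$). The genuine gap is in the convolution step, and it is precisely the obstacle you name at the end without resolving. Variation diminishing does give that $u-v:=f_X\ast f_W-f_Z\ast f_W$ changes sign at most once in the spatial variable, but a single spatial crossing with the positive part first is a statement about stochastic ordering of the distribution functions; it is logically independent of majorization, which compares the two densities after decreasing rearrangement. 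In particular it does not exclude $\|v\|_\infty>\|u\|_\infty$: if the crossing point precedes the mode of $v$, then $v\ge u$ on a neighbourhood of the point where $v$ attains its maximum, so the peak of $v$ may exceed that of $u$, and then $\int(v-t)_+>0=\int(u-t)_+$ for $t$ just below $\|v\|_\infty$, destroying the majorization at the top level. The additional data you invoke --- both convolutions vanish at $0$ with common right-derivative $M_1M_2$ --- constrains only the left endpoint and says nothing about the relative heights of the two modes, so it cannot close this gap. Worse, the missing inequality $\|f_X\ast f_W\|_\infty\ge\|f_Z\ast f_W\|_\infty$ is itself the $p=\infty$ case of the majorization you are trying to prove, so any argument that quietly presupposes it is circular.

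The paper sidesteps this entirely by never comparing the two convolutions pointwise in space. It realizes $W=\phi(Y)$ through the transport map $\phi=F_W^{-1}\circ F_Y$, shows $\phi$ is convex with $\phi'\ge1$, and verifies the interval criterion of Proposition \ref{prop:maj-suf} directly: for every interval $[a,a+\delta]$,
\begin{equation*}
\P\bigl(X+\phi(Y)\in[a,a+\delta]\bigr)\le\P\bigl(X+Y\in[\phi^{-1}(a),\phi^{-1}(a)+\delta]\bigr),
\end{equation*}
proved by conditioning on $Y$ and combining the expansiveness of $\phi$ with the monotonicity of the density of $X$. If you wish to salvage your sign-change route, you must supply an independent proof of the comparison of the suprema (the change-of-variables computation in the remark following Theorem \ref{thm:infty-ent-jump} does exactly this), at which point the transport-map argument is already doing all the work.
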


\begin{proof}
We will show the majorization of $X+Y$ over $X+W$ (i.e., the density of $X+Y$ majorizes that of $X+W$) and, in the same vein, the majorization of $X+W$ over $Z+W$. Hence, $Z+W$ is majorized by $X+Y$. This, together with inequality \eqref{eq:maj}, yields the statement \eqref{eq:major-exp}. %Inequality \eqref{eq:major-exp-1} follows from \eqref{eq:major-exp} and simple calculations.  

Now, we show the majorization of $X+Y$ over $X+W$. Note that $X+Y$ and $X+W$ are log-concave. By Proposition \ref{prop:maj-suf}, it suffice to show that for any $[a, a+\delta]\subseteq [0, \infty)$, there exists $[b, b+\delta]\subseteq [0, \infty)$ such that
\begin{equation}\label{eq:maj-prob}
\P(X+W \in [a, a+\delta])\leq  \P(X+Y \in [b, b+\delta]).
\end{equation}
Let $f_X$ and $f_Y$ be the densities of $X$ and $Y$, respectively. We can perturb $f_X$ and $f_Y$ if necessary such that both of them have infinite support. We can also make shifts if necessary so that $\|f_X\|_\infty=f_X(0)$ and $\|f_Y\|_\infty=f_Y(0)$. Write $f_Y(x)=f_Y(0)e^{-V(x)}$ for $x\geq 0$, where $V(x)$ is an non-decreasing convex function with $V(0)=0$. Then $W$ has density $f_W(x)=f_Y(0)e^{-f_Y(0)x}$ for $x\geq0$. Consider the map $\phi =F_W^{-1}\circ F_Y$, where $F_Y$ and $F_W$ are the cumulative distribution functions of $Y$ and $W$, respectively. One can check that $W\overset{d}{=}\phi(Y)$ and $\phi(y)=-f_Y(0)^{-1}\log(1-F_Y(y))$. Then we can rewrite inequality \eqref{eq:maj-prob} as
$$
\P(X+\phi(Y) \in [a, a+\delta])\leq  \P(X+Y \in [b, b+\delta]).
$$
By conditioning on $Y$, it suffices to show for all $y\geq0$ that
\begin{align}\label{eq:major-pt}
\P(X\in [a, a+\delta]-\phi(y))\leq  \P(X\in [b, b+\delta]-y).
\end{align}
Next, we demonstrate that this inequality holds for $b=\phi^{-1}(a)$. 
Since $Y$ is log-concave, the tail probability $1-F_Y(y)$ is also log-concave. Hence, $\phi$ is a convex map. Furthermore, we have
\begin{align}\label{eq:deriv-T}
\phi'(y)=\frac{e^{-V(y)}}{1-F_Y(y)}.
\end{align}
Since $\phi$ is convex, $\phi'$ is nondecreasing, thus $\phi'(y) \geq \phi'(0)=1$.

\textit{Case 1}:  $y\geq \phi^{-1}(a+\delta)$. Since $X$ is non-negative, inequality \eqref{eq:major-pt} trivially holds.

\textit{Case 2}: $\phi^{-1}(a)<y< \phi^{-1}(a+\delta)$. By integrating $\phi'$ over $[\phi^{-1}(a),y]$,  $\phi' \geq 1$ implies that
$y-\phi^{-1}(a)\leq \phi(y)-a$, which is equivalent to $a+\delta-\phi(y)\leq \phi^{-1}(a)+\delta-y$. Since $X$ is non-negative, we obtain
$$
\P(X\in [0, a+\delta-\phi(y)]\leq  \P(X\in [0, \phi^{-1}(a)+\delta-y]),
$$
which is inequality \eqref{eq:major-pt}. 

\textit{Case 3}: $y\leq \phi^{-1}(a)$. Similar to Case 2, we can apply $\phi' \geq 1$  to obtain $a-\phi(y)\geq \phi^{-1}(a)-y\geq 0$ by integrating $\phi'$ over $[y, \phi^{-1}(a)]$.  This, together with the monotonicity of the density of $X$, yields inequality \eqref{eq:major-pt}.
\end{proof}

\begin{proof}[Proof of Theorem \ref{thm:infty-ent-jump}]
Let $f_X$ and $f_Y$ be the densities of log-concave random variables $X$ and $Y$, respectively. As shown in Proposition \ref{prop:rearrange propt-function}, log-concavity is preserved by decreasing rearrangement, thus $f_X^\downarrow$ and $f_Y^\downarrow$ are also log-concave. Since decreasing rearrangement preserves the $L^p$ moments, $f_X^\downarrow$ and $f_Y^\downarrow$ are also probability densities such that $\|f_X^\downarrow\|_\infty=\|f_X\|_\infty$ and $\|f_Y^\downarrow\|_\infty=\|f_Y\|_\infty$. Let $X^{\downarrow}$ and $Y^{\downarrow}$ be random variables with densities $f_X^{\downarrow}$ and $f_Y^{\downarrow}$, respectively. Then we have
	$$
	h_{\infty}(X^{\downarrow})=h_{\infty}(X)=h_{\infty}(Z),~~~~ h_{\infty}(Y^{\downarrow})=h_{\infty}(Y)=h_{\infty}(W).
	$$
	By Lemma  \ref{lem:unimodal}, we have
	$$
	h_{\infty}(X+Y)\leq h_{\infty}(X^{\downarrow}+Y^{\downarrow}).
	$$
	By Lemma \ref{lem:exponentialization}, we have
	$$
	h_{\infty}(X^{\downarrow}+Y^{\downarrow}) \leq h_{\infty}(Z+W).
	$$
	Then we can combine these inequalities to obtain inequality \eqref{eq:infty-ent-jump-a}. Inequality \eqref{eq:infty-ent-jump-b} follows from simple calculations.
\end{proof}

\begin{rmk}
We give a short proof of the $p=\infty$ case of inequality \eqref{eq:major-exp} via measure transportation (in fact the change of variable). (This, together with Lemma \ref{lem:unimodal}, yields a short proof of Theorem \ref{thm:infty-ent-jump}). Since the map $\phi$ transports $Y$ to $W$, it holds for any function $\varphi$ integrable with respect to $F_W$ that
$$
\int_{\R_+} \varphi(\phi(x))f_Y(x)dx=\int_{\R_+} \varphi(x)f_W(x)dx.
$$
For any $y\geq 0$, we apply this identity to $\varphi(x)=f_X(y-x)$ for $0\leq x\leq y$ to obtain
\begin{align*}
(f_X\ast f_W)(y) &=\int_0^y f_X(y-x)f_W(x)dx\\
&= \int_0^{\phi^{-1}(y)} f_X(y-\phi(x))f_Y(x)dx\\
&\leq \int_0^{\phi^{-1}(y)} f_X(\phi^{-1}(y)-x)f_Y(x)dx\\
&=(f_X\ast f_Y)(\phi^{-1}(y)),
\end{align*}
where the inequality follows from the observation that $y-\phi(x)\geq \phi^{-1}(y)-x$, which is implied by \eqref{eq:deriv-T}, and the assumption that $f_X$ is non-increasing. This yields 
$$
h_\infty(X+Y)\leq h_\infty(X+W)
$$
and similarly we can obtain
$$
h_\infty(X+W)\leq h_\infty(Z+W).
$$
These two inequalities together yield the $p=\infty$ case of inequality \eqref{eq:major-exp}. One can see that the monotonicity of $f_X$ and the expansion property of the transportation map $\phi$ play the key role.
\end{rmk}

\begin{rmk}
Let $(X, Y)$ be an \textit{origin symmetric} log-concave random vector in $\R^2$. (The marginals are not necessarily independent). A result of Ball (\cite{Bal88}, Theorem 5) imply that 
$$
e^{h_\infty(X+Y)}\leq e^{h_\infty(X)}+e^{h_\infty(Y)}.
$$
This inequality can be seen as the entropic analog of Busemann's convexity theorem of intersection bodies \cite{Bus49}, which also motivates the study of reverse EPIs in \cite{BNT16, Li18}.
\end{rmk}

%----------------------------------------------------Discrete Analog-------------------------------------------------------

\section{Analogs for Interger-valued Log-concave Random Variables}

%This section contains discrete analogs of Proposition \ref{prop:2-ent-jump} and Theorem \ref{thm:infty-ent-jump}. 

We first recall the majorization of two sequences. Let $\{f(k)\}_{k=1}^\infty$ and $\{g(k)\}_{k=1}^\infty$ be two summable non-negative sequences such that $\sum_{k=1}^\infty f(k)= \sum_{k=1}^\infty g(k)$. We say that $\{f(k)\}_{k=1}^\infty$ majorizes $\{g(k)\}_{k=1}^\infty$ if the decreasing rearrangements $\{f^\downarrow(k)\}_{k=1}^\infty$ and $\{g^\downarrow(k)\}_{k=1}^\infty$ satisfy that $\sum_{k=1}^n f^\downarrow(k)\geq \sum_{k=1}^k g^\downarrow(k)$ for all $n\in\Z_+$. Karamata's inequality (or majorization inequality) says that if $\{f(k)\}_{k=1}^\infty$ majorizes $\{g(k)\}_{k=1}^\infty$ then it holds for any convex function $\varphi$ on $\R$ that
\begin{equation}\label{eq:kar}
\sum_{k=1}^\infty (\varphi\circ f)(k)\geq \sum_{k=1}^\infty (\varphi\circ g)(k).
\end{equation}

An integer-valued random variable $X$ is called \textit{log-concave} if its probability mass function $\{f(k)\}_{k\in \Z}$ is a log-concave sequence; that is, $f(k+1)^2\geq f(k)f(k+2)$ holds for all $k\in\Z$ and the support of this sequence is a contiguous interval of $\mathbb{Z}$. Similar to \eqref{eq:renyi}, the order $p$ R\'enyi entropy of $X$ is defined as
$$
H_p(X)=\frac{1}{1-p}\log \sum_{k\in \Z} f(k)^p.
$$
Particularly, we have 
$H_\infty(X) =-\log \|f\|_\infty$.

\begin{prop}\label{prop:2-conv-disct}
Let $X$ and $Y$ be i.i.d log-concave random variables on $\Z$. It holds that
\begin{equation}
H_2(X+Y)<H_2(X)+\log 2.
\end{equation}
Moreover, the inequality cannot be improved.
\end{prop}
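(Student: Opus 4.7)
The plan is to mirror the continuous symmetrization of Proposition~\ref{prop:2-ent-jump} in the integer setting and then sharpen the resulting bound using the discreteness. Let $\tilde X, \tilde Y$ be independent copies of $X, Y$, and set $U = X - \tilde X$, $V = Y - \tilde Y$. These are independent, symmetric, integer-valued log-concave random variables (discrete log-concavity is preserved by convolution) with a common pmf $f$. Since $f$ and $f*f$ are symmetric log-concave sequences on $\Z$, each attains its mode at $0$, so $\|f\|_\infty = f(0)$ and $\|f*f\|_\infty = (f*f)(0) = \sum_k f(k)^2$. The symmetrization identity $H_2(A) = H_\infty(A - \tilde A)$ used in the proof of Proposition~\ref{prop:2-ent-jump} then gives
\[
H_2(X+Y) - H_2(X) \;=\; H_\infty(U+V) - H_\infty(U) \;=\; \log \frac{f(0)}{(f*f)(0)},
\]
and the proposition reduces to the strict inequality $(f*f)(0) > f(0)/2$, with the constant sharp in a limit.

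The lower bound $(f*f)(0) \geq f(0)/2$ is extracted from the discrete log-concavity ``swap'' inequality: for every $n \in \Z$,
\[
f(0)\,f(n) \;\leq\; f\bigl(\lfloor n/2\rfloor\bigr)\,f\bigl(\lceil n/2\rceil\bigr),
\]
obtained by iterating the one-step exchange $f(k)f(l) \leq f(k+1)f(l-1)$ for $k<l$. Summing over $n \in \Z$ yields
\[
f(0) \;=\; f(0)\sum_n f(n) \;\leq\; \sum_m f(m)^2 + \sum_m f(m)f(m+1) \;=\; (f*f)(0) + (f*f)(1),
\]
and because $f*f$ is itself symmetric and log-concave, $(f*f)(1) \leq (f*f)(0)$, so $(f*f)(0) \geq f(0)/2$.

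For the strict inequality I would examine the two equality cases. Saturation of the summed swap inequality forces $f(0)f(n) = f(\lfloor n/2\rfloor)f(\lceil n/2\rceil)$ for every $n$ in the support of $f$, and also $f(\lfloor n/2\rfloor)f(\lceil n/2\rceil) = 0$ whenever $f(n) = 0$. If $\supp f = \{-N,\dots,N\}$ with $N \geq 1$, any $n$ with $N < n \leq 2N$ violates this zero-pattern; the case $f = \delta_0$ is immediate. In the remaining case $\supp f = \Z$, the recursions $f(2m) = f(m)^2/f(0)$ and $f(2m+1) = f(m)f(m+1)/f(0)$ determine $f(k) = f(0)\,r^{|k|}$ with $r = f(1)/f(0) \in (0,1)$, the symmetric geometric law. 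A direct calculation yields $(f*f)(0) = f(0)(1+r^2)/(1+r)^2$, and the equality $(f*f)(0) = f(0)/2$ would then require $(1-r)^2 = 0$, contradicting $r < 1$. Hence $(f*f)(0) > f(0)/2$ strictly in every case.

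Sharpness of the constant $\log 2$ is witnessed by taking $X$ geometric with parameter $1-r$, so that $U$ is exactly the symmetric geometric above, giving
\[
H_2(X+Y) - H_2(X) \;=\; \log\frac{(1+r)^2}{1+r^2} \;\longrightarrow\; \log 2 \qquad \text{as } r \to 1^-.
\]
I expect the most delicate step to be the equality analysis: identifying the symmetric geometric as the only candidate extremiser of the summed swap inequality, and then certifying that this one-parameter family cannot simultaneously saturate the mode-maximization bound for $f*f$.
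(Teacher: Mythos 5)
Your proof is correct, and after the common symmetrization step it diverges from the paper's argument at the key estimate. Both proofs reduce the claim to $(f\ast f)(0)>f(0)/2$ for the symmetric log-concave pmf $f$ of $X-\tilde X$, but the paper proves this by majorization: it compares the one-sided sequence $\{f(k)\}_{k\geq 0}$ with the geometric sequence $g(k)=f(0)\bigl(\tfrac{1-f(0)}{1+f(0)}\bigr)^k$ having the same value at $0$ and the same total mass, and applies Karamata's inequality with $\varphi(x)=x^2$ to obtain the quantitatively stronger bound $(f\ast f)(0)\geq \tfrac{1+f(0)^2}{2}f(0)$, from which strictness is automatic since $f(0)>0$. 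Your route instead uses only the elementary two-point swap inequality $f(0)f(n)\leq f(\lfloor n/2\rfloor)f(\lceil n/2\rceil)$ summed over $n$, together with $(f\ast f)(1)\leq (f\ast f)(0)$; this avoids the majorization/Karamata machinery entirely, but yields only the non-strict bound $f(0)/2$, so you must pay with the separate equality-case analysis (finite support, the point mass $\delta_0$, and the symmetric geometric law), which you carry out correctly. Note that in the geometric case the direct computation $(f\ast f)(0)=f(0)(1+r^2)/(1+r)^2>f(0)/2$ already closes the argument, so the concern in your final sentence about additionally ruling out saturation of $(f\ast f)(1)\leq (f\ast f)(0)$ is unnecessary. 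Both proofs identify the same extremal family (geometric laws with parameter tending to $1$) for sharpness, matching the paper's concluding remark.
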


\begin{proof}
Let $\tilde{X}$ and $\tilde{Y}$ be independent copies of $X$ and $Y$, respectively. We denote by $\{f(k)\}_{k\in\Z}$ the probability mass function of  $X-\tilde{X}$. We define the geometric sequence $\{g(k)\}_{k\in\Z_+}$ as $g(k)=f(0)\left(\frac{1-f(0)}{1+f(0)}\right)^k$. One can check that $\sum_{k=0}^\infty f(k)=\sum_{k=0}^\infty g(k)$. Note that $f(0)=g(0)$ and $\{g(k)\}_{k\in\Z_+}$ is a log-affine sequence. We have the majorization of $\{f(k)\}_{k\in\Z_+}$ over $\{g(k)\}_{k\in\Z_+}$. We apply Karamata's inequality \eqref{eq:kar} to obtain
\begin{align*}
\sum_{k\in\Z} f(k)^2 &=2\sum_{k\in\Z_+} f(k)^2-f(0)^2\geq 2\sum_{k\in\Z_+} g(k)^2-f(0)^2 \\
&=\frac{1+f(0)^2}{2}\cdot f(0)>\frac{f(0)}{2}.
\end{align*}
The above inequality, together with the symmetry of $X-\tilde{X}$, implies that
\begin{align}\label{eq:3rd-b}
H_\infty((X-\tilde{X})+(Y-\tilde{Y})) &=-\log (f\ast f)(0) =-\log \sum_{k\in\Z} f(k)^2\notag\\
&<-\log f(0)+\log 2=H_\infty(X-\tilde{X})+\log 2.
\end{align}
Observe that for any i.i.d. discrete log-concave random variables $U$ and $V$, it holds that $H_2(U)=H_\infty(U-V)$. We have
\begin{align*}
H_2(X+Y) &=H_\infty((X+Y)-(\tilde{X}+\tilde{Y}))\\
&=H_\infty((X-\tilde{X})+(Y-\tilde{Y}))\\
%&=H_\infty((X-\tilde{X})-(Y-\tilde{Y}))\\
&< H_\infty(X-\tilde{X})+\log 2\\
&= H_2(X)+\log 2.
\end{align*}
The inequality follows from \eqref{eq:3rd-b}. The moreover part is addressed later in Remark 4.4.
\end{proof}

\begin{rmk}
Similar to the continuous case, the identity $H_2(X+Y)=H_2(X-Y)$ holds for any $\Z$-valued i.i.d log-concave random variables $X$ and $Y$. Hence, Proposition \ref{prop:2-conv-disct} is the same as the $p=2$ case of Theorem 2.11 of Melbourne and Palafox-Castillo \cite{MPC23}. 
\end{rmk}

\begin{thm}\label{thm:infty-conv-disct}
Let $X$ and $Y$ be i.i.d. log-concave random variables on $\Z$. Assume that their probability mass function $f$ is monotone. It holds that
\begin{equation*}\label{eq:infty-conv-disct}
\|f\ast f\|_\infty> e^{-1}\|f\|_\infty.
\end{equation*}
Moreover, the inequality can not be improved. In other words, we have
$$
H_\infty(X+Y)<H_\infty(X)+1.
$$
\end{thm}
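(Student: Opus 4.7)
The plan is to mirror the two-step strategy used in the proof of Theorem \ref{thm:infty-ent-jump}: first reduce the problem from a general monotone log-concave PMF to the geometric PMF with the same $\|\cdot\|_\infty$ (the discrete analog of the exponential distribution), and then compute the convolution of two geometrics explicitly. Because the geometric family never attains the value $e^{-1}\|f\|_\infty$ but only approaches it as the parameter $p\to 0$, we obtain the strict inequality together with the sharpness claim.

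\textbf{Reduction to a geometric.} After translating and reflecting if necessary, assume $f$ is non-increasing on $\Z_{\ge 0}$, and write $p := f(0) = \|f\|_\infty$. Let $g(k) = p(1-p)^k$, $k\ge 0$, be the geometric PMF with the same maximum; the goal is $\|f\ast f\|_\infty \ge \|g\ast g\|_\infty$. I would imitate the short measure-transport argument in the Remark after Theorem \ref{thm:infty-ent-jump}. The survival function $\bar F(k) := \P(X\ge k)$ of a discrete log-concave random variable is itself log-concave (classical), so the ratios $\bar F(k+1)/\bar F(k)$ are non-increasing; combined with $\bar F(1)/\bar F(0) = 1-p$ this gives $\bar F(k) \le (1-p)^k$ for every $k$, i.e.\ $X$ is stochastically dominated by the geometric $W$. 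The quantile transport $\phi$ sending $X$ to $W$ is then \emph{expansive}: $\phi(k+1) - \phi(k) \ge 1$. Using this expansion together with the monotonicity of $f$, one obtains the pointwise bound $(f\ast g)(n) \le (f\ast f)(\phi^{-1}(n))$ exactly as in the continuous remark, hence $\|f\ast g\|_\infty \le \|f\ast f\|_\infty$; applying the argument symmetrically in the second coordinate delivers $\|g\ast g\|_\infty \le \|f\ast f\|_\infty$.

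\textbf{Geometric computation.} For $g(k) = pq^k$ with $q := 1-p$, we have $(g\ast g)(n) = (n+1)p^2 q^n$, and the discrete maximum is attained at $n^\ast := \lfloor q/p\rfloor$. Writing $q/p = n^\ast + \theta$ with $\theta \in [0,1)$, so that $(n^\ast+1)p = 1 - \theta p$ and $q^{n^\ast} = q^{q/p - \theta}$,
\begin{equation*}
\frac{(g\ast g)(n^\ast)}{p} \;=\; (1-\theta p)\, q^{q/p - \theta} \;=\; e^{L(\theta)},
\end{equation*}
where $L(\theta) := \log(1-\theta p) + (q/p - \theta)\log q$. Two observations conclude the proof. (i) $L(0) = ((1-p)/p)\log(1-p) > -1$, equivalently $\ell(p) := p + (1-p)\log(1-p) > 0$, which follows from $\ell(0) = 0$ and $\ell'(p) = -\log(1-p) > 0$ on $(0,1)$. (ii) $L$ is strictly concave on $[0,1]$, since $L''(\theta) = -p^2/(1-\theta p)^2 < 0$, and $L(0) = L(1) = (q/p)\log q$, so $L(\theta) \ge L(0) > -1$ throughout $[0,1]$. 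Combining with the reduction step, $\|f\ast f\|_\infty \ge \|g\ast g\|_\infty = p\,e^{L(\theta)} > e^{-1}p$. Sharpness follows from $L(0) \to -1$ as $p\to 0$, realized by geometrics of small parameter.

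\textbf{Expected main obstacle.} The delicate step is the discrete transport in the reduction. Unlike the continuous case where $\phi$ is a smooth increasing map with $\phi' \ge 1$, in the integer setting there is in general no deterministic function carrying the PMF of $Y$ exactly onto that of $W$; one has to work with a quantile coupling through a uniform $U$ and then convert the pointwise comparison $(f_X\ast f_W)(n) \le (f_X\ast f_Y)(\phi^{-1}(n))$ of the continuous remark into a clean discrete statement, using the monotonicity of $f_X$ to absorb the rounding. I expect this bookkeeping---mirroring the three-case analysis ($y$ below, within, or above $[\phi^{-1}(a),\phi^{-1}(a+\delta)]$) in the proof of Lemma \ref{lem:exponentialization}---to be the main technical hurdle, whereas the geometric computation is essentially a convexity exercise.
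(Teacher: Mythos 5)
Your second step (the explicit computation for the geometric, including the strict bound $L(\theta)\ge L(0)>-1$ and the sharpness as $p\to 0$) is correct, and essentially reproduces the computation the paper records in the remark following the theorem. The problem is the reduction step, which is not just ``bookkeeping'': the discrete quantile coupling does not have the property your argument needs, and this can be seen on a concrete example. Take $f_Y=(0.4,0.3,0.2,0.1)$ on $\{0,1,2,3\}$ (log-concave and non-increasing, $p=0.4$) and let $W$ be geometric with $f_W(k)=0.4\cdot 0.6^k$. With the coupling $Y=F_Y^{-1}(U)$, $W=F_W^{-1}(U)$, at $u=0.65$ one gets $(Y_u,W_u)=(1,2)$ while at $u=0.75$ one gets $(Y_u,W_u)=(2,2)$; the gap $F_W^{-1}(u)-F_Y^{-1}(u)$ drops from $1$ to $0$, so the transport is not ``expansive'' in the sense required. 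Concretely, for $n=2$ there is no single integer $m$ with $0\le m-Y_u\le n-W_u$ for all $u$ in $\{W_u\le n\}$ (you would need $m\ge 2$ from $u=0.75$ and $m\le 1$ from $u=0.65$), so the pointwise inequality $f_X(n-W_u)\le f_X(m-Y_u)$ that drives the continuous change-of-variable argument simply fails here. The target inequality $\|f\ast f\|_\infty\ge\|g\ast g\|_\infty$ is true in this example ($0.25$ vs.\ $0.192$), but your proposed proof of it does not go through; establishing it would require a genuinely different (e.g.\ averaged or majorization-based) argument, which you have not supplied.

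For comparison, the paper avoids the reduction entirely and argues directly: writing $f(k)=e^{-V(k)}$ with $V$ convex and non-decreasing, convexity gives $V(i)+V(k-i)\le V(0)+V(k)$ for $0\le i\le k$, hence $(f\ast f)(k)\ge (k+1)f(0)f(k)=\|f\|_\infty\,\phi(k)$ with $\phi(k)=(k+1)e^{-V(k)}$; it then shows $\sup_k\phi(k)>1/e$ by locating the argmax $k^*$ of the log-concave sequence $\phi$, using the first-order conditions $\phi(k^*\pm 1)\le\phi(k^*)$ to bound $V$ linearly on either side of $k^*$, and summing the resulting geometric series against the normalization $\sum_k f(k)=1$. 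This is elementary and sidesteps the transport issue altogether. If you want to keep your two-step plan, you would need to prove the discrete analog of Lemma~\ref{lem:exponentialization} by the interval-majorization route of Proposition~\ref{prop:maj-suf} rather than by a pointwise coupling; as written, the reduction is a genuine gap.
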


\begin{proof}
We assume that $f$ is non-increasing and  supported on $\Z_+$. Write $f(k)=e^{-V(k)}$, where $V$ is non-decreasing and convex. Hence, $\|f\|_\infty=e^{-V(0)}$. We apply the convexity of $V$ to obtain
\begin{align*}
(f\ast f)(k) %&=\sum_{i=0}^kf(i)f(k-i)\\
&=\sum_{i=0}^ke^{-[V(i)+V(k-i)]}\geq \sum_{i=0}^ke^{-[V(0)+V(k)]}=(k+1)e^{-V(k)}\cdot \|f\|_\infty.
\end{align*}
Write $\phi(k):=(k+1)e^{-V(k)}=e^{\log(k+1)-V(k)}$. It suffices to show that
\begin{equation}\label{eq:max-disct}
\sup_{k\in\Z_+}\phi(k)>1/e.
\end{equation}
Since $\phi$ is log-concave, the supremum in \eqref{eq:max-disct} can be achieved, say at $k=k^*$. Consider the following scenarios.

\textit{Case 1}: $k^*=0$. Then it is necessary to have $\phi(0)\geq \phi(1)$, which can be rewritten as
$$
V(1)-V(0)\geq \log 2.
$$
Since $V$ is convex, it holds that
$$
V(k)\geq V(0)+k(V(1)-V(0))\geq V(0)+k\log 2.
$$
As a consequence, we obtain
$$
1=\sum_{k=0}^\infty f(k)=e^{-V(0)}\sum_{k=0}^\infty e^{-(V(k)-V(0))}\leq \phi(0)\sum_{k=0}^\infty 2^{-k}=2\phi(0). 
$$
Hence, it is necessary to have $\phi(0)\geq 1/2$ and inequality \eqref{eq:max-disct}  holds.

\textit{Case 2}: $k^*\geq 1$. Then it is necessary to have $\phi(k^*-1)\leq \phi(k^*)$ and $ \phi(k^*+1) \leq \phi(k^*)$, which can be rewritten as
\begin{align*}
V(k^*)-V(k^*-1)&\leq \log(1+1/k^*)\\
V(k^*+1)-V(k^*)&\geq \log(1+1/(k^*+1)).
\end{align*}
Owing to the convexity of $V$, we obtain for $k\leq k^*$ that
\begin{align}\label{eq:est-V-a}
V(k)&\geq V(k^*)-(k^*-k)[V(k^*)-V(k^*-1)]\notag\\
&\geq V(k^*)+(k-k^*)\log(1+1/k^*)
\end{align}
and for $k\geq k^*$ that
\begin{align}\label{eq:est-V-b}
V(k)&\geq V(k^*)+(k-k^*)[V(k^*+1)-V(k^*)]\notag\\
&\geq V(k^*)+(k-k^*)\log(1+1/(k^*+1)).
\end{align}
Apply the estimates \eqref{eq:est-V-a} and \eqref{eq:est-V-b} to obtain
\begin{align*}
1=\sum_{k=0}^\infty f(k)&\leq e^{-V(k^*)}\left(\sum_{k=0}^{k^*}\left(1+\frac{1}{k^*}\right)^{k^*-k}+\sum_{k=k^*+1}^\infty\left(1+\frac{1}{k^*+1}\right)^{-(k-k^*)}\right)\\
&=e^{-V(k^*)}\left(1+k^*\left(1+\frac{1}{k^*}\right)^{k^*+1}\right)\\
&=\phi(k^*)\left(\frac{1}{k^*+1}+\left(1+\frac{1}{k^*}\right)^{k^*}\right)\\
&<e\phi(k^*).
\end{align*}
In the last inequality, we use the fact that $(x+1)^{-1}+(1+1/x)^x\nearrow e$ as $x\rightarrow\infty$. Hence, it is necessary to have $\phi(k^*)>1/e$, i.e., inequality \eqref{eq:max-disct} holds.
\end{proof}

\begin{rmk}
Proposition \ref{prop:2-conv-disct} and Theorem \ref{thm:infty-conv-disct} are tight for geometric distributions $f(k)=(1-\lambda)\lambda^k$ for $k\in\Z_+$. One can check that $(f\ast f)(k)=(k+1)(1-\lambda)^2\lambda^k$ and that
$$
e^{H_2(f\ast f)-H_2(f)}=\frac{\sum_{k=0}^\infty f(k)^2}{\sum_{k=0}^\infty (f\ast f)(k)^2}=\frac{(1+\lambda)^2}{1+\lambda^2}\rightarrow 2,~~\text{as}~\lambda\rightarrow 1.
$$
It is clear that $\|f\|_\infty=1-\lambda$. Set $\lambda=e^{-1/(k+1)}$. As $k\rightarrow\infty$, we have
$$
\frac{\|f\ast f\|_\infty}{\|f\|_\infty}\geq (k+1)(1-\lambda)\lambda^k=(k+1)(1-e^{-1/(k+1)})e^{-k/(k+1)}\rightarrow \frac{1}{e}.
$$
If the probability mass function of $X$ is symmetric about an integer or a half-integer, we can apply the argument in Proposition \ref{prop:2-conv-disct} to establish $H_\infty(X+Y)<H_\infty(X)+1/2$.
\end{rmk}

{\bf Acknowledgement.} J. L. would like to thank Mokshay Madiman and James Melbourne for valuable discussions. The work is supported by the National Natural Science Foundation of China (NSFC) grant 62201175.

%----------------------------------------------------References-------------------------------------------------------

\bibliographystyle{plain}

\end{document}